\numberwithin{equation}{section}
\numberwithin{figure}{section}
\theoremstyle{plain}
\newtheorem{thm}{\protect\theoremname}
  \theoremstyle{plain}
  \newtheorem{prop}[thm]{\protect\propositionname}
  \theoremstyle{plain}
  \newtheorem{cor}[thm]{\protect\corollaryname}
  \theoremstyle{plain}
  \newtheorem{lem}[thm]{\protect\lemmaname}
\numberwithin{thm}{section}
  \providecommand{\corollaryname}{Corollary}
  \providecommand{\lemmaname}{Lemma}
  \providecommand{\propositionname}{Proposition}
\providecommand{\theoremname}{Theorem}
\begin{document}

\title{Factorizations of Analytic Self-Maps of the Upper Half-Plane}

\author{Hari Bercovici and Dan Timotin}
\subjclass[2010]{Primary:30H15}

\thanks{HB was supported in part by grants from the National Science Foundation.
DT was supported in part by a grant of the Romanian National Authority for Scientific
 Research, CNCS--UEFISCDI, project number PN-II-ID-PCE-2011-3-0119.}

\address{HB: Department of Mathematics, Indiana University, Bloomington, IN
47405, USA}

\email{bercovic@indiana.edu}

\address{DT: Simion Stoilow Institute of Mathematics of the Romanian Academy,
PO Box 1-764, Bucharest 014700, Romania}

\email{Dan.Timotin@imar.ro}

\begin{abstract}
We extend a factorization due to Kre\u\i n to arbitrary analytic
functions from the upper half-plane to itself. The factorization represents
every such function as a product of fractional linear factors times
a function which, generally, has fewer zeros and singularities than
the original one.  The reult is used to construct functions with given zeros and poles on the real line.
\end{abstract}
\maketitle

\section{Introduction}

Denote by $\mathbb{C}^{+}=\{x+iy:x\in\mathbb{R},y>0\}$ the upper
half of the complex plane. We consider the multiplicative structure
of the set $\mathcal{H}$ of analytic functions $f\colon\mathbb{C}^{+}\to\overline{\mathbb{C}^{+}}$.
We start with a classical result of M.
G. Kre\u\i n \cite[Theorem 27.2.1]{levin}. Assume that $k\in\mathcal{H}$
is the restriction of a meromorphic function defined on $\mathbb{C}$
with the property that $k(\overline{z})=\overline{k(z)}$ and, in
addition, $k$ has infinitely many positive and infinitely many negative
zeros. Denote by $\{a_{n})_{n=1}^{\infty}$ the zeros of $k$, all
of which are real and simple, and by $\{b_{n}\}_{n=1}^{\infty}$ all
the poles of $k$, all of which are also real and simple, arranged
so that the interval $(b_{n},a_{n})$ contains no zeros or poles.
Then Kre\u\i n proved that there exist positive constants $\{c_{n}\}_{n=1}^{\infty}$
such that
\begin{equation}
k(z)=\prod_{n=1}^{\infty}c_{n}\frac{z-a_{n}}{z-b_{n}}.\label{eq:k-product}
\end{equation}
The constants $c_{n}$ are needed for convergence of the product,
and they can be taken to be equal to $b_{n}/a_{n}$ for all but one
or two values of $n$. This result was studied in further detail by
Chalendar, Gorkin, and Partington \cite{chal-et-al}, including the
cases where there are, for instance, only finitely many negative zeros.
Furthermore, \cite{chal-et-al} addresses the more general situation
where $f$ is not meromorphic, but is allowed a finite number of essential
singularities on the real line. However the results are not as complete
in this more general situation.

Our main result states that any function $f\in\mathcal{H}$ can be
written as
\[
f=kg,
\]
where $k$ is a product of the form (\ref{eq:k-product}), and $g\in\mathcal{H}$
has the following additional property: if $I\subset\mathbb{R}$ is
any interval such that $g$ extends continuously to a real-valued
function on $I$, then the extension is positive on $I$. This factorization
is of interest only for functions $f$ which do extend to a real function
on a nonempty open subset $\Omega$ of $\mathbb{R}$. When the complement
of this set $\Omega$ has linear measure equal to zero, the factor
$g$ is a positive constant, and this extends Kre\u\i n's theorem
as well as the factorization results in \cite{chal-et-al}.  In particular, we se in Section 4 aunder what conditions we can prescribe the real zeros and poles of a function in $\mathcal{H}$.

Our factorization is closely related to results of Aronszajn and Donoghue
\cite{aron-don}. These authors also consider the multiplicative
structure of $\mathcal{H}$ starting with the observation that $\log f\in\mathcal{H}$
provided that $f\in\mathcal{H}\setminus\{0\}$. A result essentially
equivalent to Kre\u\i n's theorem is stated in \cite[page 331]{aron-don}.
The emphasis in these works is however not on factorization into linear
fractional factors. We also refer to Donoghue's book \cite{donog}
for information about the class $\mathcal{H}$, and Section 2
of \cite{gest-tsek} for a brisk review of the basic results concerning
this class.

\section{Kre\u\i n Products}

We now describe in more detail the products $k$ needed in our results.
Given a proper open interval $J\subset\mathbb{R}\cup\{\infty\}$, there exists a unique conformal automorphism $p_{J}\in\mathcal{H}$
such that $p_{J}$ is negative precisely on $J$, and $|p_{J}(i)|=1$.
Explicitly, if $J=(b,a)$ with $b<a$ is a finite interval, then
\[
p_{J}(z)=\frac{|i-b|}{|i-a|}\cdot\frac{z-a}{z-b}.
\]
If $J=(-\infty,a),$ then
\[
p_{J}(z)=\frac{z-a}{|i-a|}.
\]
If $J$ is the complement of a closed interval $\overline{J'}$, where
$J'$ is of the preceding two types, then
\[
p_{J}=-\frac{1}{p_{J'}}.
\]
We also agree that $p_{\varnothing}(z)=1$ and $p_{\mathbb{R}\cup\{\infty\}}(z)=-1$.

In order to simplify the notation of intervals in $\mathbb R\cup\{\infty\}$, we set
\[
(b,a)=(b,\infty)\cup\{\infty\}\cup(-\infty,a)
\]
when $b>a$.

Consider now an arbitrary open subset $O\subset\mathbb{R}\cup\{\infty\}$,
and write it as a union of pairwise disjoint open intervals
\[
O=\bigcup_{0\le n<N}J_{n},\quad J_n=(b_n,a_n),\quad0\le n<N,
\]
where $N\in\{0,1,\dots,\infty\}$. Denote by $X$ the closure of the set $\{b_n:0\le n<N\}$  in $\mathbb{R}\cup\{\infty\}$.  The function
\[
k_{O}(z)=\prod_{0\le n<N}p_{J_{n}}
\]
 is called the \emph{Kre\u\i n product }associated with the set $O$.

We collect in the following statement the basic properties of Kre\u\i n products.  The proofs are similar to the ones available in the classical case where $\sup_n b_n=+\infty$ and $\inf_n b_n=-\infty$.  We sketch these arguments for the reader's convenience.

\begin{prop}
\label{prop:krein-prod-properties}Consider an open set $O=\bigcup_{0\le n<N}(b_n,a_n)\subset\mathbb R\cup\{\infty\}$, where the intervals $(b_n,a_n)$ are pairwise disjoint. Denote by $X$ the closure of the set $\{b_n:0\le n<N\}$ in $\mathbb R\cup\{\infty\}$.
\begin{enumerate}
\item The product $k_O$ converges uniformly on compact subsets of $\mathbb{C}^+$, and it defines a function in $\mathcal H$.
\item The function $k_O$ continues analytically to  $(\mathbb{C}\cup\{\infty\})\setminus X$.
\item The function $k_O|( \mathbb{R}\cup\{\infty\})\setminus X$ is real-valued.  More precisely, $k_O(x)<0$ for $x\in O$ and $k_O(x)>0$ for $x\in(\mathbb{R}\cup\{\infty\})\setminus\overline{O}$.
\item If $a_n\notin X$ for some $n$, then $a_n$ is a simple zero of $k_O$.
\item If $b_n$ is an isolated point of $X$ for some $n$, then $b_n$ is a simple pole of $k_O$.
\item Let $O_{1},O_{2}\subset\mathbb{R}\cup\{\infty\}$ be two open sets such that the
symmetric difference $O_{1}\triangle O_{2}$ has linear Lebesgue measure
equal to zero. Then $k_{O_{1}}=k_{O_{2}}$.
\item If $\varphi$ is a
conformal automorphism of $\mathbb{C}^{+}$, then $k_{\varphi^{-1}(O)}=ck_{O}\circ\varphi$,
where $c=1/|k_{O}(\varphi(i))|>0$.
\end{enumerate}
\end{prop}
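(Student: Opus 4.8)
The plan is to derive, and then systematically exploit, the single integral representation
\[
\log k_{O}(z)=\int_{O}\left(\frac{1}{t-z}-\frac{t}{1+t^{2}}\right)dt,\qquad z\in\mathbb{C}^{+},
\]
where $\log$ is the branch of the logarithm with imaginary part in $(0,\pi)$. For a single interval $J$, a direct computation with the explicit formulas for $p_{J}$ shows that $\log p_{J}(z)=\int_{J}\bigl(\tfrac{1}{t-z}-\tfrac{t}{1+t^{2}}\bigr)\,dt$; the normalization $|p_{J}(i)|=1$ is exactly what removes the additive real constant, since $\tfrac{1}{t-i}-\tfrac{t}{1+t^{2}}=\tfrac{i}{1+t^{2}}$, so the right-hand side at $z=i$ equals $i\int_{J}\tfrac{dt}{1+t^{2}}$, which is purely imaginary. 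Summing over the pairwise disjoint intervals $J_{n}$ then gives the displayed formula for the partial products $\prod_{n<M}p_{J_{n}}$, once one knows that the series $\sum_{n}\log p_{J_{n}}$ converges; establishing that convergence is the content of part (1).

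For (1), the key is the identity $\tfrac{1}{t-z}-\tfrac{t}{1+t^{2}}=\tfrac{1+tz}{(t-z)(1+t^{2})}$, whose modulus, for $z$ in a fixed compact $K\subset\mathbb{C}^{+}$, is dominated by a function of $t$ integrable over $\mathbb{R}$ (bounded on bounded $t$-intervals because $\operatorname{Im}z$ is bounded below on $K$, and $O(1/t^{2})$ as $|t|\to\infty$). Summing the corresponding integrals over the disjoint $J_{n}$ thus yields at most the integral over $\mathbb{R}$, which is finite, and the tails tend to $0$ uniformly on $K$; hence $\sum_{n}\log p_{J_{n}}$ converges uniformly on compacta, so $k_{O}=\exp\bigl(\sum_{n}\log p_{J_{n}}\bigr)$ is holomorphic on $\mathbb{C}^{+}$ and the product converges uniformly on compacta. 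Moreover $\operatorname{Im}\log k_{O}(z)=\int_{O}\tfrac{\operatorname{Im}z}{|t-z|^{2}}\,dt\in[0,\pi]$ (it is $\pi$ times the harmonic measure of $O$ as seen from $z$), so $k_{O}$ maps $\mathbb{C}^{+}$ into $\overline{\mathbb{C}^{+}}$, i.e.\ $k_{O}\in\mathcal{H}$. Statement (6) is then immediate: if $O_{1}\triangle O_{2}$ is Lebesgue-null the two integrals coincide, so $k_{O_{1}}=k_{O_{2}}$.

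Parts (2)--(5) come from the local shape of $O$: the basic fact, proved by a short argument with disjointness, is that on any open interval $I$ containing no left endpoint $b_{n}$, the set $O\cap I$ is either empty or a subinterval of $I$ sharing its left endpoint with $I$. Applying this near a point $x_{0}\in(\mathbb{R}\cup\{\infty\})\setminus X$ (where such an $I$ surrounds $x_{0}$) gives $\mathbf{1}_{O}=\mathbf{1}_{(\alpha,\beta)}$ a.e.\ near $x_{0}$, hence
\[
\log k_{O}(z)=\log\frac{\beta-z}{\alpha-z}+h(z),
\]
with $h$ holomorphic in a full neighborhood of $x_{0}$ (from the part of the integral over $O$ bounded away from $x_{0}$) and real on the real axis there; so $k_{O}(z)=e^{h(z)}\frac{\beta-z}{\alpha-z}$ continues across $x_{0}$ as a $(\mathbb{C}\cup\{\infty\})$-valued analytic function. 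With Schwarz reflection across $(\mathbb{R}\cup\{\infty\})\setminus X$ (where $k_{O}$ is real-valued away from its simple poles) this yields (2), the point $\infty$ being handled via the substitution $t\mapsto-1/t$. Reading off the local form: $k_{O}(x_{0})<0$ when $x_{0}\in O$ and $k_{O}(x_{0})>0$ when $x_{0}\notin\overline{O}$, which is (3); a simple zero exactly when $x_{0}=\beta$, which for $x_{0}\notin X$ forces $x_{0}=a_{n}$, which is (4). For (5) one applies the basic fact on $(b_{n}-\varepsilon,b_{n})$ (which contains no left endpoint, $b_{n}$ being isolated in $X$): either $O$ is absent just to the left of $b_{n}$, so $\mathbf{1}_{O}=\mathbf{1}_{(b_{n},\beta)}$ near $b_{n}$ and $k_{O}$ has a simple pole there; or $O$ reaches up to $b_{n}$ from the left, which happens exactly when $b_{n}=a_{m}$ for some $m$, in which case $k_{O}$ is holomorphic and non-vanishing at $b_{n}$ --- this last (abutting intervals) does not occur in the classical alternating situation.

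For (7), I would argue factor by factor. If $\varphi$ is a conformal automorphism of $\mathbb{C}^{+}$ and $J\subset\mathbb{R}\cup\{\infty\}$ is an open interval, then $p_{J}\circ\varphi$ is again a conformal automorphism of $\mathbb{C}^{+}$ that is negative precisely on $\varphi^{-1}(J)$, so by the uniqueness characterizing $p_{\varphi^{-1}(J)}$ one has $p_{\varphi^{-1}(J)}=c_{J}\,(p_{J}\circ\varphi)$ with $c_{J}=1/|p_{J}(\varphi(i))|>0$. Since $\varphi^{-1}(O)=\bigcup_{n}\varphi^{-1}(J_{n})$ is a disjoint union of open intervals and all the products converge by (1), multiplying over $n$ gives $k_{\varphi^{-1}(O)}=C\,(k_{O}\circ\varphi)$ with $C=\prod_{n}c_{J_{n}}$; this $C$ is a convergent product of positive numbers (equivalently, it equals the non-zero constant $k_{\varphi^{-1}(O)}/(k_{O}\circ\varphi)$), hence $C>0$, and evaluating at $z=i$ and using $|k_{\varphi^{-1}(O)}(i)|=|k_{O}(i)|=1$ identifies $C=1/|k_{O}(\varphi(i))|$. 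The step I expect to be the main obstacle is the bookkeeping in (2)--(5): converting the hypothesis ``$x_{0}\notin X$'' into the precise one-sided local description of $O$, and then sorting out which of the cases $x_{0}\in O$, $x_{0}\notin\overline{O}$, $x_{0}=\beta$, $x_{0}=\alpha$ produces which behavior. Everything else --- the convergence estimate, the harmonic-measure bound, and parts (6) and (7) --- is short once the integral representation is in hand.
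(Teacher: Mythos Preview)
Your approach is essentially the paper's: both hinge on the exponential representation $k_O=e^{v}$ with
\[
v(z)=\int_{O}\frac{1+tz}{(t-z)(1+t^{2})}\,dt,
\]
and the differences are only in execution. For (1) the paper argues geometrically---$\arg p_{J_n}(z)$ is the angle subtended by $J_n$ at $z$, so the arguments sum to at most $\pi$, giving convergence at $z=i$, and then a normal-family argument upgrades this to uniform convergence on compacta---whereas you bound the integrand directly and sum, which is slightly more elementary and gives uniform convergence in one step. For (2)--(5) the paper factors out a single $p_{(b_n,a_n)}$ and observes that $k_O/p_{(b_n,a_n)}$ extends analytically and positively across $(b_n,a_n)$; your local-shape ``basic fact'' and integral splitting are just another way to isolate that one factor. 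For (6) the paper compares arguments and then normalizes at $i$, while you compare the integrals directly; for (7) the two arguments are identical. Your handling of the abutting case $b_n=a_m$ in (5) is in fact more careful than the paper's, which tacitly treats the $(b_n,a_n)$ as connected components of $O$ so that this case does not arise.
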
\begin{proof}
The convergence in (1) only needs to be discussed when $N=\infty$.  
For $z\in\mathbb{C}^{+}$,
the argument $\arg p_{J_{n}}(z)$ is equal to the angle at $z$ subtended
by the interval $(b_n,a_n)$, and therefore the sum of these arguments converges
to a number $\le\pi$. Since $|p_{(b_n,a_n)}|=1$, we conclude that the infinite product $k_O(i)$ converges and $|k_O(i)|=1$.
Convergence
elsewhere in $\mathbb{C}^{+}$ can be deduced via a normal family
argument. Indeed, the partial products $p_{(b_0,a_0)}p_{(b_1,a_1)}\cdots p_{(b_n,a_n)}$
form a normal family in $\mathbb{C}^{+}$, and any limit point $f$
of this family is such that $|f(i)|=1$ and $\arg f(z)$ equals the
angle at $z$ subtended by the set $O$. This determines the
function $f$ completely, hence $f$ is the limit (uniform on compact
subsets of $\mathbb{C}^{+}$) of these partial products.

To verify (2) and (3), we use the alternative formula $k_{O}=e^{v}$, where
\[
v(z)=\int_{O}\frac{1+tz}{t-z}\cdot\frac{dt}{1+t^{2}}.
\]
Indeed, $\Re v(i)=0$, while
\[
\Im v(z)=\Im z\int_{O}\frac{dt}{|t-z|^{2}},\quad z\notin\overline{O}.
\]
When $\Im z>0$, this integral equals precisely the angle at $z$
subtended by $O$. The function $v$ is  analytic
on $(\mathbb{C}\cup\{\infty\})\setminus\overline{O}$, and it is real-valued on $(\mathbb{R}\cup\{\infty\})\setminus\overline{O}$.  Thus $k_O(x)$ is positive on this last set. The same argument shows that $k_O/p_{(a_n,b_n)}$ can be continued analytically across the interval $(b_n,a_n)$, and the continuation is positive on this interval.  We deduce that $k_O$ can also be continued across $(b_n,a_n)$, and the continuation is negative on this interval.

Properties (4) and (5) follow from the fact that $k_O/p_(b_n,a_n)$ is analytic and positive at $a_n$ if $a_n\notin X$, and it is analytic and positive at $b_n$ if $b_n\notin X$.

To prove (6), observe that $\arg k_{O_{1}}(z)=\arg k_{O_{2}}(z)$
for $z\in\mathbb{C}^{+}$. It follows that $k_{O_{1}}/k_{O_{2}}$
is constant, and the constant is 1, as can be seen by evaluating the
quotient at $z=i$.

For (7), observe that $p_{\varphi^{-1}(J)}/p_{J}\circ\varphi$
is a positive constant (depending on $J$) for every interval $J$.
It follows that $k_{\varphi^{-1}(J)}/k_{J}\circ\varphi$ is a positive
constant as well.
\end{proof}
Part (6) of the preceding proposition indicates how a Kre\u\i n
product can be modified to minimize the number of factors. To make this precise, we define the \emph{Lebesgue regularization} of an open set $O$ to consist of
all points $x\in\mbox{\ensuremath{\mathbb{R}}}$ for which there is
$\varepsilon>0$ such that $(x-\varepsilon,x+\varepsilon)\setminus O$
has Lebesgue measure equal to zero.  Thus, the Lebesgue regularization of $O$ is the largest open set $O_1\supset O$ such that the Lebesgue measure of $O_1\setminus O$ is equal to zero, and therefore we have $k_O=k_{O_1}$. For instance,
\[
k_{(1,2)\cup(2,3)}=p_{(1,3)}.
\]
More generally, the Lebesgue regularization of a union $\bigcup_{n}(b_n,a_n)$ of subintervals of $(b,a)$ is equal to $(b,a)$ if $a-b=\sum_{n}(a_n-b_n)$. Thus, $k_{\mathbb{R}\setminus C}=-1$ if $C$ denotes the usual Cantor
ternary set.

An open set $O$ is said to be \emph{Lebesgue regular} if it equals its Lebesgue regularization.

\section{The Nevanlinna Representation}

The additive structure of the class $\mathcal{H}$ is described by
the Nevanlinna representation. Given $\alpha\ge0,\beta\in\mathbb{R}$,
and a finite, positive Borel measure $\rho$ on $\mathbb{R}$, the
function
\begin{equation}
f(z)=\alpha z+\beta+\int_{\mathbb{R}}\frac{1+zt}{t-z}\, d\rho(t),\quad z\in\mathbb{C}^{+},\label{eq:nev-formule}
\end{equation}
belongs to $\mathcal{H}$. Conversely, every function $f\in\mathcal{H}$
can be represented under this form. The constants $\alpha,\beta$
and the measure $\rho$ are uniquely determined by the function $f$.
For instance,
\[
\alpha=\lim_{y\to+\infty}\frac{f(iy)}{iy},
\]
while $\rho$ is the weak limit as $\varepsilon\downarrow0$ of the
measures
\[
d\rho_{\varepsilon}(t)=\frac{\Im f(t+i\varepsilon)}{\pi(1+t^{2})}\, dt.
\]
We denote by $\sigma(f)$ the closed support of the measure $\rho$,
to which we add $\infty$ if $\alpha>0$ or the support is unbounded. If the set $\Omega(f)=(\mathbb{R}\cup\{\infty\})\setminus\sigma(f)$
is not empty, then the formula (\ref{eq:nev-formule}) defines an
analytic function in $(\mathbb{C}\cup\{\infty\})\setminus\sigma(f)$
which takes real values on $\Omega(f)$. Conversely, if $J\subset\mathbb{R}$
is an interval such that $f$ can be extended to a continuous function
on $J$, then $J\subset\Omega(f)$. The isolated points of $\sigma(f)$
are simple poles of $f$. Observe that
\[f'(z)=\alpha+\int_{\mathbb{R}}\frac{1+t^2}{(t-z)^2}\,d\rho(t),\]
and this shows that $f$ is an increasing function on any interval in $\mathbb R$ disjoint from $\sigma(f)$.  In particular, the zeros of $f$ in such an interval must be simple.

A few simple illustrations will help clarify these notions. The function
$f(z)=z+i$ belongs to $\mathcal{H}$, and it is the restriction to
$\mathbb{C}^{+}$ of an entire function. However, this entire function
is not real-valued at any point of $\mathbb{R}$, and therefore $\sigma(f)=\mathbb{R}\cup\{\infty\}.$
The measure $\rho$ corresponding to this function is the Cauchy distribution
\[
d\rho(t)=\frac{dt}{\pi(1+t^{2})}.
\]
A more interesting example is the function
\[
f(z)=z+\sqrt{z^{2}-1},
\]
where the square root is taken to be positive for $z>1$, for which
we have $\sigma(f)=[-1,1]\cup\{\infty\}$. For this example we have
\[
-\frac{1}{f(z)}=\sqrt{z^{2}-1}-z,
\]
and $\sigma(-1/f)=[-1,1]$.  For a Kre\u\i n factor $p_J$, where $J=(b,a)$ is a proper interval, we have $\sigma(p_J)=\{b\}$.

For any function $f\in\mathcal{H}$, the limit
\[
f(t)=\lim_{\varepsilon\downarrow0}f(t+i\varepsilon)
\]
exists for almost every $t\in\mathbb{R}$ (relative to Lebesgue measure).
Moreover, the absolutely continuous part of the measure $\rho$ in
the representation (\ref{eq:nev-formule}) is equal to
\[
\frac{\Im f(t)\, dt}{\pi(1+t^{2})}.
\]
In particular, the measure $\rho$ is singular relative to Lebesgue
measure if and only if $f(x)$ is real for almost every $x\in\mathbb{R}$. 

The following result was proved by G. Letac \cite{letac-proc}; the
case of measures with finite support  is due to Boole \cite{boole}.
\begin{thm}
\label{thm:measure-preserving}Assume that the function $f\in\mathcal{H}$
is given by 
\[
f(z)=z+\beta+\int_{\mathbb{R}}\frac{1+zt}{t-z}\, d\rho(t),\quad z\in\mathbb{C}^{+},
\]
where $\rho$ is singular relative to Lebesgue measure. Then the map
$f$ preserves Lebesgue measure, that is, the Lebesgue
measure of the set 
\[
\{x\in\mathbb{R}:f(x)\in(c,d)\}
\]
equals $d-c$ when $-\infty<c<d<\infty$.
\end{thm}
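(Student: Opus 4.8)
The plan is to show that the push-forward of Lebesgue measure under $f$ is again Lebesgue measure. For a.e.\ $x\in\mathbb R$ the radial limit $f(x)=\lim_{\varepsilon\downarrow0}f(x+i\varepsilon)$ exists, and since $\rho$ is singular it is real for a.e.\ $x$. For a Borel set $B\subset\mathbb R$ let $\mu(B)$ be the Lebesgue measure of $\{x\in\mathbb R:f(x)\in B\}$; this defines a (possibly infinite) Borel measure $\mu$ on $\mathbb R$, and the theorem asserts precisely that $\mu$ is Lebesgue measure. Now the Poisson integral of Lebesgue measure is identically $1$, and a positive measure $\nu$ on $\mathbb R$ with $\int(1+t^2)^{-1}\,d\nu(t)<\infty$ is uniquely determined by its Poisson integral; so it is enough to prove that the Poisson integral of $\mu$ is also identically $1$, that is,
\[
\frac1\pi\int_{\mathbb R}\frac{v\,dx}{(f(x)-u)^2+v^2}=1\qquad\text{for every }w=u+iv\in\mathbb C^+ .
\]
The finiteness of this integral (hence of $\int(1+t^2)^{-1}\,d\mu$) will be a by-product.

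To evaluate the integral I would fix $w=u+iv\in\mathbb C^+$ and introduce
\[
g_w(z)=\frac{-2v}{f(z)-\overline w},\qquad z\in\mathbb C^+ .
\]
Since $f$ is nonconstant it maps $\mathbb C^+$ into $\mathbb C^+$, so $\Im\bigl(f(z)-\overline w\bigr)=\Im f(z)+v>v$; consequently $g_w$ is analytic on $\mathbb C^+$, $|g_w|<2$, and $\Im g_w=2v(\Im f+v)/|f-\overline w|^2>0$. Thus $g_w\in\mathcal H$ and $g_w$ is bounded. Moreover, at a.e.\ boundary point $x$ (namely where $f(x)$ is real) the radial limit is $g_w(x+i0)=-2v/(f(x)-\overline w)$, so that
\[
\Im g_w(x+i0)=\frac{2v^2}{(f(x)-u)^2+v^2},
\]
which is $2v$ times the integrand above.

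The crux is that, being bounded and harmonic on $\mathbb C^+$, the function $\Im g_w$ is the Poisson integral of its boundary function $\psi_w\in L^\infty(\mathbb R)$, $\psi_w\ge0$. In particular its representing measure carries no mass at $\infty$ and is absolutely continuous: a nonzero positive singular part $\sigma$ would contribute to $\Im g_w$ the positive harmonic summand given by the Poisson integral of $\sigma$, which is unbounded (it has nontangential limit $+\infty$ at $\sigma$-a.e.\ point), contradicting $|g_w|<2$. Hence, by monotone convergence in the Poisson integral,
\[
\int_{\mathbb R}\psi_w(x)\,dx=\lim_{y\to\infty}\pi y\,\Im g_w(iy),
\]
and this last limit is read off from the normalization $\alpha=1$: since $f(iy)/(iy)\to\alpha=1$, we get $iy\,g_w(iy)=-2v\big/\bigl((f(iy)-\overline w)/(iy)\bigr)\to-2v$, whence $y\,\Im g_w(iy)\to2v$. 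Therefore $\int_{\mathbb R}\psi_w(x)\,dx=2\pi v$, and recalling $\psi_w(x)=2v^2/((f(x)-u)^2+v^2)$ a.e.,
\[
\frac1\pi\int_{\mathbb R}\frac{v\,dx}{(f(x)-u)^2+v^2}=\frac{1}{2\pi v}\int_{\mathbb R}\psi_w(x)\,dx=1 ,
\]
and the theorem follows from the uniqueness of the Poisson representation.

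I expect the absolute-continuity step to be the main point of the argument. It is there that the structure of $f$ is used, through the inequality $\Im f>0$: this keeps $f(z)$ uniformly away from $\overline w$ and so makes $g_w$ bounded. The singularity hypothesis on $\rho$, by contrast, enters only to guarantee that $f$ has real boundary values a.e., so that $\mu$ is supported on $\mathbb R$ and the statement is not vacuous (it fails, e.g., for $f(z)=z+i$). The remaining technicalities — measurability of $\{x:f(x)\in B\}$, and the identification of the radial boundary value of $g_w$ with $-2v/(f(x)-\overline w)$ wherever $f(x)$ is real — are routine.
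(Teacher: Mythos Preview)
The paper does not supply a proof of this theorem at all: it simply attributes the result to Letac (with the finite-support case due to Boole) and cites \cite{letac-proc}. Your argument is a correct, self-contained proof. The key identity
\[
\frac{1}{\pi}\int_{\mathbb R}\frac{v\,dx}{(f(x)-u)^2+v^2}=1,\qquad u+iv\in\mathbb C^+,
\]
obtained by reading off the total mass of the boundary function of the bounded harmonic function $\Im g_w$ from the asymptotics $iy\,g_w(iy)\to-2v$, is exactly what one needs, and your justification that the representing measure of $\Im g_w$ has no singular part or mass at $\infty$ (because $|g_w|<2$) is the right one. The uniqueness step---that a positive measure $\nu$ with $\int(1+t^2)^{-1}\,d\nu<\infty$ whose Poisson integral is identically~$1$ must be Lebesgue measure---is standard (e.g.\ recover $\nu$ as the weak-$*$ limit of $u(t+i\varepsilon)\,dt$).

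One small comment: you might make explicit that the statement concerns \emph{intervals} $(c,d)$, so you actually only need $\mu((c,d))=d-c$; your Poisson-integral argument of course gives the stronger conclusion that $\mu$ agrees with Lebesgue measure on all Borel sets, but it is worth noting that the two are equivalent here.
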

We will actually require an equivalent result, first proved by Hru\v s\v c\"ev
and Vinogradov \cite{hru-vin}. We include the simple derivation from
Theorem \ref{thm:measure-preserving}. Given a finite, positive, measure
$\mu$ on $\mathbb{R}$, singular with respect to Lebesgue measure,
the Cauchy transform
\[
G_{\mu}(z)=\int_{\mathbb{R}}\frac{d\mu(t)}{z-t},\quad z\in\mathbb{C}^{+},
\]
has real limit values
\[
G_{\mu}(x)=\lim_{\varepsilon\downarrow0}G_{\mu}(x+i\varepsilon)
\]
for almost every $x\in\mathbb{R}$. Here is the result of \cite{hru-vin};
the special case of measures with finite support is due again to Boole
\cite{boole}.
\begin{thm}
\label{thm:hruscev-vino}Let $\mu$ be a finite, positive measure
on $\mathbb{R}$, singular with respect to Lebesgue measure. For every
$y>0$, the Lebesgue measure of the sets
\[
\{x\in\mathbb{R}\colon G_{\mu}(x)>y\},\quad\{x\in\mathbb{R}\colon G_{\mu}(x)<-y\}
\]
is equal to $\mu(\mathbb{R})/y$.\end{thm}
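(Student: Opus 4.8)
The plan is to deduce this from Theorem~\ref{thm:measure-preserving} applied to the function $F=\mu(\mathbb R)/G_\mu$ (we may assume $\mu\neq0$, the statement being vacuous otherwise). First I would observe that $G_\mu$ maps $\mathbb C^+$ into the open lower half-plane, since $\Im\bigl(1/(z-t)\bigr)=-\Im z/|z-t|^2<0$ for $z\in\mathbb C^+$; hence $-G_\mu\in\mathcal H$, and $-G_\mu$ never vanishes on $\mathbb C^+$ (its imaginary part is strictly positive there because $\mu\neq0$), so $F=-\mu(\mathbb R)/(-G_\mu)$ again belongs to $\mathcal H$.

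Next I would identify the Nevanlinna data $F(z)=\alpha z+\beta+\int_{\mathbb R}\frac{1+zt}{t-z}\,d\nu(t)$ of $F$. Dominated convergence gives $iy\,G_\mu(iy)=\int_{\mathbb R}\frac{iy}{iy-t}\,d\mu(t)\to\mu(\mathbb R)$ as $y\to+\infty$, so $F(iy)/(iy)\to1$ and therefore $\alpha=1$. To verify that $\nu$ is singular I would invoke the criterion recalled in Section~3: $\nu$ is singular precisely when $F(x)=\lim_{\varepsilon\downarrow0}F(x+i\varepsilon)$ is real for a.e.\ $x\in\mathbb R$. Since $F\in\mathcal H$, this finite limit exists a.e.; as $G_\mu(x)$ is real and finite for a.e.\ $x$ and $F=\mu(\mathbb R)/G_\mu$, the limit $F(x)$ could not be finite at a point where $G_\mu(x+i\varepsilon)\to0$, so in fact $G_\mu(x)\neq0$ for a.e.\ $x$ and $F(x)=\mu(\mathbb R)/G_\mu(x)$ is a finite real number for a.e.\ $x$. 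Hence $\nu$ is singular, and Theorem~\ref{thm:measure-preserving} applies to $F$, yielding $|\{x:F(x)\in(c,d)\}|=d-c$ whenever $-\infty<c<d<\infty$.

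Finally I would translate the level sets of $G_\mu$ into those of $F$: for a.e.\ $x$, the condition $G_\mu(x)>y$ is equivalent to $F(x)\in(0,\mu(\mathbb R)/y)$, and $G_\mu(x)<-y$ is equivalent to $F(x)\in(-\mu(\mathbb R)/y,0)$, both being genuine bounded intervals because $\mu$ is finite. Applying the measure-preserving identity to each of these two intervals shows that the two sets in the statement have Lebesgue measure $\mu(\mathbb R)/y$, as claimed. I expect the only point requiring genuine care to be the almost-everywhere non-vanishing of the boundary function $G_\mu$, which is exactly what legitimizes passing to $1/G_\mu$ on the real line; everything else is bookkeeping with the Nevanlinna representation.
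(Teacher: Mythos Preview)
Your proposal is correct and follows essentially the same approach as the paper: pass to the reciprocal of the Cauchy transform, check that it lies in $\mathcal H$ with $\alpha=1$ and singular Nevanlinna measure, and then read off the level-set identities from Theorem~\ref{thm:measure-preserving}. The paper normalizes $\mu$ to a probability measure and works with $f=1/G_\mu$, whereas you scale by $\mu(\mathbb R)$ and work with $F=\mu(\mathbb R)/G_\mu$; these are equivalent, and you supply a bit more detail (notably the a.e.\ nonvanishing of $G_\mu$ on $\mathbb R$) than the paper does.
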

\begin{proof}
It suffices to consider the case when $\mu$ is a probability measure.
The function $f(z)=1/G_{\mu}(z)$ is easily seen to belong to $\mathcal{H}$,
and $f(x)$ is real for almost every $x\in\mathbb R$. Moreover, since 
\[
\lim_{y\to\infty}iyG_{\mu}(iy)=\mu(\mathbb{R})=1,
\]
we deduce that Theorem \ref{thm:measure-preserving} applies to $f$.
The result follows now immediately because, for instance,
\[
\{x\in\mathbb{R}\colon G_{\mu}(x)>y\}=\{x\in\mathbb{R}:f(x)\in(0,1/y)\}.\qedhere
\]
\end{proof}
We need an easy consequence of this result.
\begin{cor}
\label{cor:unbounded_f}Assume that the function $f\in\mathcal{H}$
is given by \emph{(\ref{eq:nev-formule}),} and the restriction of
the measure $\rho$ to an interval $J$ is nonzero but singular relative
to Lebesgue measure. Then the function $f$ is not essentially
bounded below or above on the interval $J$.\end{cor}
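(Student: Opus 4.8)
The plan is to prove that $\operatorname{ess\,sup}_{J}f=+\infty$; the assertion that $\operatorname{ess\,inf}_{J}f=-\infty$ then follows by applying this to $\tilde f(z)=-\overline{f(-\bar z)}\in\mathcal H$, whose Nevanlinna measure is the reflection of $\rho$ and so is nonzero and singular on $-J$. Since $\rho|_{J}\neq0$ and $\rho$ is finite, $\rho$ is already nonzero on a bounded subinterval of $J$, so I may assume $J$ is bounded. Decompose
\[
f=g+h,\qquad h(z)=\int_{J}\frac{1+zt}{t-z}\,d\rho(t),\qquad g=f-h\in\mathcal H .
\]
The Nevanlinna measure of $g$ is the restriction of $\rho$ to $\mathbb R\setminus J$, so $\sigma(g)\cap J=\varnothing$; hence $g$ continues to a real-analytic function on $J$ which, by the monotonicity recorded earlier in this section, is increasing on $J$. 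Writing $\mu=(1+t^{2})\,d\rho|_{J}$, a nonzero finite positive measure supported in $\overline J$ and singular with respect to Lebesgue measure, the identity $\frac{1+zt}{t-z}=\frac{1+t^{2}}{t-z}-t$ gives $h=c-G_{\mu}$ with $c=-\int_{J}t\,d\rho(t)\in\mathbb R$.

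The key mechanism is this: fix $x_{0}\in J$ (where $g$ is automatically finite); since $g$ is increasing, $g(x)\geq g(x_{0})$ for every $x\in J$ with $x\geq x_{0}$, so on the set $A_{y}=\{x\in J:x\geq x_{0},\ G_{\mu}(x)<-y\}$ we have $f(x)=g(x)+c-G_{\mu}(x)>g(x_{0})+c+y$. Thus it suffices to produce, for each $y>0$, a choice of $x_{0}$ making $\lambda(A_{y})>0$; then $\operatorname{ess\,sup}_{J}f\geq g(x_{0})+c+y$ for all $y$, hence $\operatorname{ess\,sup}_{J}f=+\infty$. Take $x_{0}$ to be a point of $J$ lying to the left of $\sigma(f)\cap J$ but with $\rho$ still carrying mass in $(x_{0},\sup J)$; such a point exists precisely when $\sigma(f)\cap J$ does not reach $\inf J$, and (by first passing to the subinterval $(\gamma,\sup J)$, $\gamma$ in the gap) also whenever $\sigma(f)\cap J$ has an internal gap with mass to its right — so the only configuration left uncovered is $\sigma(f)\cap J=\overline J$. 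For such an $x_{0}$ one has $\operatorname{supp}\mu\subseteq[x_{0},\sup J]$ and $\mu(\{x_{0}\})=0$. By Theorem \ref{thm:hruscev-vino}, $\lambda(\{G_{\mu}<-y\})=\mu(\mathbb R)/y$. Since $G_{\mu}>0$ on $(\sup J,\infty)$, this set misses $(\sup J,\infty)$; its part in $(-\infty,x_{0})$ is an interval $(r_{y},x_{0})$ with $G_{\mu}(r_{y})=-y$, and since $\frac{1}{r_{y}-t}>\frac{-1}{x_{0}-r_{y}}$ for every $t>x_{0}$ — hence $\mu$-almost everywhere, as $\mu(\{x_{0}\})=0$ — we get $-y=\int\frac{d\mu(t)}{r_{y}-t}>-\mu(\mathbb R)/(x_{0}-r_{y})$, i.e. $x_{0}-r_{y}<\mu(\mathbb R)/y$. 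Therefore $\{G_{\mu}<-y\}\cap[x_{0},\sup J)$, which agrees with $A_{y}$ up to a null set, has Lebesgue measure $\mu(\mathbb R)/y-(x_{0}-r_{y})>0$, as needed.

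It remains to handle two special configurations. If $\sigma(f)\cap J$ has a point $x_{1}$ that is isolated in $\sigma(f)$, then $x_{1}$ is a real simple pole of $f$, near which $f$ takes arbitrarily large values of either sign, and both conclusions are immediate (this also disposes of an isolated atom of $\rho$ in $J$). If $\sigma(f)\cap J=\overline J$, the decomposition $f=g+c-G_{\mu}$ still works with $\mu=(1+t^{2})\,d\rho|_{J}$ (so that $g$ is real-analytic on \emph{all} of $J$, hence bounded on compact subintervals) and one still has $\lambda(\{G_{\mu}<-y\}\cap J)>0$ for every $y$ by the same poke-out estimate, using that $\mu$ has no atoms at $\partial J$; if the limit $g(\inf J^{+})$ is finite then $g\geq g(\inf J^{+})$ throughout $J$ and the previous argument applies verbatim, and otherwise (by the reflection and the symmetric statement for $\operatorname{ess\,inf}$) one is reduced to the case where $g$ is unbounded near an endpoint of $J$, where $f=g+c-G_{\mu}$ must be controlled near that endpoint.

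The main obstacle I expect is exactly this last point: when $\rho|_{J}$ has full support on $\overline J$ and the analytic factor $g$ blows up at an endpoint of $J$, one must show that the contributions of $g$ and of $G_{\mu}$ do not conspire to keep $f$ bounded near that endpoint; everything else is a direct application of Theorem \ref{thm:hruscev-vino} together with the monotonicity of $g$ on $J$.
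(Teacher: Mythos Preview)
Your decomposition $f=g+c-G_{\mu}$ with $\mu=(1+t^{2})\,d\rho|_{J}$ and your appeal to Theorem~\ref{thm:hruscev-vino} are exactly the paper's approach: the paper sets the same $\mu$, notes that $f+G_{\mu}$ continues analytically across $J$ (hence is bounded on every compact subset of $J$), and declares that the corollary then follows immediately from Theorem~\ref{thm:hruscev-vino}. That is the whole argument---three sentences.

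Your write-up, by contrast, is explicitly unfinished: after a long case analysis (placing a reference point $x_{0}$ to the left of $\operatorname{supp}\mu$, handling internal gaps, isolated poles, the full-support configuration) you concede in the final paragraph that you cannot close the case in which $g=f+G_{\mu}$ blows up at an endpoint of $J$. So as it stands this is not a proof. Most of the machinery you introduce---the monotonicity of $g$, the reflection $\tilde f(z)=-\overline{f(-\bar z)}$, the careful location of $x_{0}$ relative to $\operatorname{supp}\mu$---is absent from the paper and is precisely what drives you into the unresolved endpoint situation.

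The one genuinely useful step you carry out is the ``poke-out'' estimate: since $\mu(\partial J)=0$, the part of $\{G_{\mu}<-y\}$ lying to the left of $J$ has measure strictly less than $\mu(\mathbb R)/y$, so the level set meets $J$ itself in positive measure (and symmetrically for $\{G_{\mu}>y\}$). The paper's stance is that this, together with the boundedness of $f+G_{\mu}$ on compact subintervals, already yields the conclusion; it does not perform any endpoint analysis of $g$ and does not split into cases. Your worry that the level sets might crowd toward $\partial J$, where $g$ is uncontrolled, is not unreasonable to raise, but the paper treats the passage as immediate rather than engaging with it---so your elaborate case distinction takes you farther from, not closer to, the paper's argument, and still leaves a gap.
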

\begin{proof}
Denote by $\mu$ the restriction of the measure $(1+t^{2})\, d\rho(t)$
to the interval $J$. Then the function $f(z)+G_{\mu}(z)$ can be
continued analytically across the interval $J$, thus $f(x)+G_{\mu}(x)$
is bounded on any compact subset of $J$. The corollary follows now
immediately from Theorem \ref{thm:hruscev-vino}.\end{proof}
\begin{cor}
\label{cor:lebesgur-regularity-of_f>0}For every $f\in\mathcal{H}$,
the set
\[
\Gamma(f)=\{x\in\Omega(f):f(x)<0\}
\]
is Lebesgue regular.\end{cor}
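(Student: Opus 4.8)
The plan is to argue by contradiction, showing that no point outside $\Gamma(f)$ can belong to the Lebesgue regularization of $\Gamma(f)$. Observe first that $\Gamma(f)$ is an open subset of $\mathbb{R}$, since $f$ extends to a continuous real-valued function on the open set $\Omega(f)$. Suppose, toward a contradiction, that there are a point $x_{0}\in\mathbb{R}\setminus\Gamma(f)$ and an $\varepsilon>0$ such that the interval $I=(x_{0}-\varepsilon,x_{0}+\varepsilon)$ satisfies $|I\setminus\Gamma(f)|=0$. Then for almost every $x\in I$ we have $x\in\Omega(f)$ and $f(x)<0$; in particular the boundary value $f(x)$ is real, so $\Im f(x)=0$ for almost every $x\in I$. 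Since the absolutely continuous part of $\rho$ has density $\Im f(t)/\pi(1+t^{2})$, the restriction of $\rho$ to $I$ is singular with respect to Lebesgue measure. I would then distinguish two cases, according to whether this restriction vanishes.

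If $\rho|_{I}\ne 0$, I apply Corollary~\ref{cor:unbounded_f} with $J=I$: it asserts, among other things, that $f$ is not essentially bounded above on $I$, which is absurd because $f(x)<0$ for almost every $x\in I$. This step is the one I expect to carry the weight of the proof: it is precisely the dichotomy in Corollary~\ref{cor:unbounded_f} (itself a consequence of the Hru\v s\v c\"ev--Vinogradov theorem) that makes the presence of singular mass on $I$ incompatible with $f$ being negative, hence essentially bounded above, almost everywhere on $I$. Therefore $\rho|_{I}=0$.

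It remains to treat the case $\rho|_{I}=0$. Then $I$ is disjoint from the support of $\rho$, so $I\subset\Omega(f)$ and $f$ extends across $I$ to a real-analytic function which is increasing, and in particular continuous, on $I$. From $f(x)<0$ for a.e.\ $x\in I$ and continuity it follows that $f\le 0$ on all of $I$; and if $f(y)=0$ for some $y\in I$, then monotonicity together with $f\le 0$ would force $f\equiv 0$ on the subinterval $(y,x_{0}+\varepsilon)$, contradicting $f(x)<0$ a.e.\ on $I$. Hence $f<0$ throughout $I$, so $I\subset\Gamma(f)$ and in particular $x_{0}\in\Gamma(f)$, the desired contradiction. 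Finally, should the point $\infty$ fail to lie in $\Gamma(f)$, one reduces to a finite point by composing $f$ with a conformal automorphism $\varphi$ of $\mathbb{C}^{+}$ with $\varphi^{-1}(\infty)\in\mathbb{R}$, using that $\Gamma(f\circ\varphi)=\varphi^{-1}(\Gamma(f))$ and that $\varphi^{-1}$ carries Lebesgue-null sets to Lebesgue-null sets and therefore preserves Lebesgue regularity.
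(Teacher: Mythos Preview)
Your proof is correct and follows essentially the same route as the paper's: contradiction via Corollary~\ref{cor:unbounded_f} once the restriction of $\rho$ to the interval is seen to be nonzero and singular. You are in fact more thorough than the paper, which asserts $J\cap\sigma(f)\ne\varnothing$ without comment; you explicitly dispatch the case $\rho|_I=0$ using monotonicity of $f$ on $\Omega(f)$, and your treatment of $\infty$ is a harmless extra precaution.
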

\begin{proof}
Assume to the contrary that there exists $x\in\mathbb{R}\setminus\Gamma(f)$
and $\varepsilon>0$ such that the interval $J=(x-\varepsilon,x+\varepsilon)$
is contained almost everywhere in $\Gamma(f)$, but $J\cap\sigma(f)\ne\varnothing.$
If the function $f\in\mathcal{H}$ is given by (\ref{eq:nev-formule}),
it follows that the restriction of $\rho$ to $J$ is nonzero, and
this restriction is singular as well since its support is a closed
set of Lebesgue measure equal to zero. Corollary \ref{cor:unbounded_f}
implies that $f$ is not essentially bounded above on $J$, contrary
to the assumption that $f<0$ almost everywhere on $J$.
\end{proof}

Proposition \ref{prop:krein-prod-properties} describes the sets
 $\sigma(f),\Omega(f),$ and $\Gamma(f)$ in
the case of a Kre\u\i n product.  We state the result for further use.

\begin{prop}
Consider a Lebesgue-regular open set $O=\bigcup_{0\le n<N}(b_n,a_n)\subset\mathbb R\cup\{\infty\}$, where the intervals $(b_n,a_n)$ are pairwise disjoint. Denote by $X$ the closure of the set $\{b_n:0\le n<N\}$ in $\mathbb R\cup\{\infty\}$. Then
\begin{enumerate}
\item $\sigma(k_O)=X$;
\item $\Gamma(k_O)=O$;
\item the real poles of $k_O$ are precisely the points $b_n$ which are isolated in $X$;
\item a point $a_n$ is in $\sigma(k_O)$ precisely when $a_n\in X$, otherwise $a_n$ is a simple zero.
\end{enumerate}
\end{prop}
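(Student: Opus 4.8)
The plan is to deduce all four statements from Proposition~\ref{prop:krein-prod-properties}, from the elementary facts about $\sigma(f)$, $\Omega(f)$ and the monotonicity of functions in $\mathcal H$ recorded above, and from Corollary~\ref{cor:lebesgur-regularity-of_f>0}. By Proposition~\ref{prop:krein-prod-properties}(7) a conformal change of variable reduces matters to the case $\infty\notin X$, and I will not comment on the point $\infty$ again. The first step is the easy inclusion $\sigma(k_O)\subseteq X$: by parts (2) and (3) of Proposition~\ref{prop:krein-prod-properties} the function $k_O$ extends to a real-valued analytic function on every component interval of $(\mathbb R\cup\{\infty\})\setminus X$, and since any interval to which a function in $\mathcal H$ extends continuously is contained in $\Omega$, we get $(\mathbb R\cup\{\infty\})\setminus X\subseteq\Omega(k_O)$.

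Next I would prove statement (2), that $\Gamma(k_O)=O$. The inclusion $O\subseteq\Gamma(k_O)$ is immediate, because $O\subseteq(\mathbb R\cup\{\infty\})\setminus X\subseteq\Omega(k_O)$ by the first step, while $k_O<0$ on $O$ by Proposition~\ref{prop:krein-prod-properties}(3). For the reverse inclusion note that $\Gamma(k_O)$ is open, that $\Gamma(k_O)\subseteq\Omega(k_O)$ so $\Gamma(k_O)\cap X=\varnothing$, and that $\Gamma(k_O)\subseteq\overline O$ because $k_O>0$ off $\overline O$ by Proposition~\ref{prop:krein-prod-properties}(3); hence $\Gamma(k_O)\setminus O\subseteq\partial O\setminus X$. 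The crux — and the point I expect to require the most care — is the purely set-theoretic claim that $\partial O\subseteq\{a_n:0\le n<N\}\cup X$. To prove it, take $x\in\partial O$ which is not one of the $a_n$; then $x$ is the limit of a sequence $x_k$ lying in pairwise distinct components $(b_{n_k},a_{n_k})$ of $O$. For $k$ large these components meet a fixed bounded neighborhood of $x$; since they are pairwise disjoint, all but finitely many of them actually lie inside that neighborhood, so the sum of their lengths is finite, their lengths tend to $0$, and therefore $b_{n_k}=x_k-(x_k-b_{n_k})\to x$, which places $x$ in $X$. Consequently $\Gamma(k_O)\setminus O$ is contained in the countable set $\{a_n:0\le n<N\}$ and so is Lebesgue null; since $\Gamma(k_O)$ is then an open set containing $O$ and differing from it by a null set, the description of the Lebesgue regularization as the largest such open set, together with the Lebesgue regularity of $O$, forces $\Gamma(k_O)\subseteq O$. (Alternatively one can dispense with measure theory here: by Proposition~\ref{prop:krein-prod-properties}(4) each $a_n\notin X$ is a zero of $k_O$, hence not in $\Gamma(k_O)$.)

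With (2) in hand I would finish (1) by proving $X\subseteq\sigma(k_O)$; as $\sigma(k_O)$ is closed it suffices to show $b_n\in\sigma(k_O)$ for each $n$. If not, then $b_n\in\Omega(k_O)$, so $k_O$ extends analytically across $b_n$ and is nondecreasing on a neighborhood of $b_n$. Picking $\delta>0$ with $(b_n,b_n+\delta)\subseteq(b_n,a_n)\subseteq O$, monotonicity and Proposition~\ref{prop:krein-prod-properties}(3) yield $k_O(b_n)\le k_O(x)<0$ for $x\in(b_n,b_n+\delta)$, so $b_n\in\Gamma(k_O)=O$, contradicting $b_n\notin O$.

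Statements (3) and (4) are then short. If $b_n$ is isolated in $X$ it is a simple pole of $k_O$ by Proposition~\ref{prop:krein-prod-properties}(5); conversely a real pole of $k_O$ is an isolated point of $\sigma(k_O)=X$ — by Proposition~\ref{prop:krein-prod-properties}(2), $k_O$ is analytic off $X$, and a pole is isolated in the set of singularities — and an isolated point of $\overline{\{b_n\}}$ is one of the $b_n$; this is (3). For (4), $\sigma(k_O)=X$ by (1), so $a_n\in\sigma(k_O)$ exactly when $a_n\in X$, and when $a_n\notin X$ it is a simple zero by Proposition~\ref{prop:krein-prod-properties}(4). Apart from the set-theoretic lemma used in (2) and the monotonicity argument in (1), everything reduces to bookkeeping with already-established facts.
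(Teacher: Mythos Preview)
The paper does not give a separate proof of this proposition; it simply asserts that Proposition~\ref{prop:krein-prod-properties} ``describes'' these sets and then states the result, so your write-up is supplying details the paper omits. Most of your argument is correct, including the set-theoretic lemma $\partial O\subseteq\{a_n\}\cup X$ and the monotonicity step. There is, however, a genuine circularity. In proving the inclusion $\Gamma(k_O)\subseteq O$ you write ``$\Gamma(k_O)\subseteq\Omega(k_O)$ so $\Gamma(k_O)\cap X=\varnothing$''. The second clause needs $\Omega(k_O)\cap X=\varnothing$, i.e.\ $X\subseteq\sigma(k_O)$; but at this stage you have only established the opposite inclusion $\sigma(k_O)\subseteq X$. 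You then prove $X\subseteq\sigma(k_O)$ \emph{afterwards}, and that proof in turn invokes $\Gamma(k_O)=O$ in its final line. Neither your measure-theoretic version nor the alternative via Proposition~\ref{prop:krein-prod-properties}(4) escapes this loop: both reduce $\Gamma(k_O)\setminus O$ to a subset of $X$ and then need $\Gamma(k_O)\cap X=\varnothing$ to finish.

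The fix is to reorder: prove $b_n\in\sigma(k_O)$ directly, before (2). Your monotonicity argument already shows that if $b_n\in\Omega(k_O)$ then $k_O<0$ on some $(b_n-\delta,b_n+\delta)$. On $(b_n-\delta,b_n)$ the inequality $k_O<0$ rules out points of $(\mathbb R\setminus X)\setminus\overline O$ (where $k_O>0$) and points $a_m\notin X$ (where $k_O=0$); combined with $\partial O\subseteq\{a_m\}\cup X$ this forces $(b_n-\delta,b_n)\subseteq O\cup X$. Since $O$ is open, $X$ is closed, $O\cap X=\varnothing$, and the interval is connected, either $(b_n-\delta,b_n)\subseteq X$ or $(b_n-\delta,b_n)\subseteq O$. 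The first is impossible because $X=\overline{\{b_m\}}$ has empty interior (any interval in $X$ would contain some $b_m$, and then points of $(b_m,a_m)\subseteq O$, contradicting $O\cap X=\varnothing$). The second forces $(b_n-\delta,b_n)\subseteq(b_m,a_m)$ for some $m\ne n$, hence $a_m=b_n$; but then $O\cup\{b_n\}=O\cup(b_m,a_n)$ is open and differs from $O$ by a null set, contradicting the Lebesgue regularity of $O$. This establishes (1) independently, after which $\Gamma(k_O)\cap X=\varnothing$ is legitimate and your proof of (2) goes through unchanged.
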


\section{Factorization}

We begin by considering factors of the form $p_{J}$.
\begin{lem}
\label{lem:factor_a_single_p_J}Consider a nonzero function $f\in\mathcal{H}$,
and an interval $J\subset\Omega(f)$ such that $f(x)<0$ for all $x\in J$.
Then there exists a function $g\in\mathcal{H}$ such that $f=p_{J}g$.\end{lem}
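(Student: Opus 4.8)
The plan is to show directly that $g:=f/p_J$ belongs to $\mathcal H$; since $g$ is automatically analytic on $\mathbb C^{+}$, the whole content is the inequality $\Im g\ge 0$. Analyticity holds because neither $f$ nor $p_J$ vanishes on $\mathbb C^{+}$: the map $p_J$ is a conformal automorphism of $\mathbb C^{+}$, and if a nonzero $f\in\mathcal H$ had a zero at an interior point $z_0$, then the nonnegative harmonic function $\Im f$ would vanish at $z_0$, hence be identically $0$, forcing $f$ to be a real constant and thus $f\equiv 0$. Since $\mathbb C^{+}$ is simply connected and $f,p_J$ are zero-free there, I would fix analytic branches $\log f$ and $\log p_J$ on $\mathbb C^{+}$ whose imaginary parts are the arguments, with $\arg f\in[0,\pi]$ (this is exactly the statement $f\in\mathcal H$) and $\arg p_J\in(0,\pi)$ (this says $p_J(\mathbb C^{+})\subset\mathbb C^{+}$). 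Put $\log g:=\log f-\log p_J$, an analytic function on $\mathbb C^{+}$ with $e^{\log g}=g$, and $u:=\Im\log g=\arg f-\arg p_J$, a bounded harmonic function with values in $[-\pi,\pi]$. Then $g=|g|e^{iu}$ with $|g|>0$, so $\Im g=|g|\sin u$, and it suffices to prove $0\le u\le\pi$ on $\mathbb C^{+}$.

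To control $u$ I would combine the boundary behaviour of $f$ and $p_J$ with the maximum principle. Because $J\subset\Omega(f)$, the function $f$ extends analytically across $J$ and is real there, in fact $f<0$ on $J$ by hypothesis; likewise $p_J$ extends analytically across $J$ (it is a Möbius map with no pole on $J$) and is negative there by definition. Hence, for every $t\in J$, both $f(t+i\varepsilon)\to f(t)<0$ and $p_J(t+i\varepsilon)\to p_J(t)<0$ as $\varepsilon\downarrow 0$, the limits approached from within $\overline{\mathbb C^{+}}$, so $\arg f(t+i\varepsilon)\to\pi$ and $\arg p_J(t+i\varepsilon)\to\pi$; consequently $u$ extends continuously to $\mathbb C^{+}\cup J$ with $u\equiv 0$ on $J$. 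Off $\overline J$ the map $p_J$ is real-valued and positive, so $\arg p_J$ has boundary value $0$ almost everywhere there, while $\arg f$ has a nontangential boundary value in $[0,\pi]$ almost everywhere on $\mathbb R$; therefore the nontangential boundary values of $u$ lie in $[0,\pi]$ almost everywhere on $\mathbb R$. Since $u$ is bounded and harmonic on $\mathbb C^{+}$, it is the Poisson integral of these boundary values, so $0\le u\le\pi$ throughout $\mathbb C^{+}$. This yields $\Im g=|g|\sin u\ge 0$, i.e. $g\in\mathcal H$, and $f=p_Jg$.

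The step I expect to be the main obstacle is the passage from "the boundary values of $u$ lie in $[0,\pi]$ a.e." to "$u\in[0,\pi]$ everywhere", i.e. ruling out a spurious harmonic term (the point at infinity). This is precisely where the boundedness of $u$ is essential: bounded harmonic functions on $\mathbb C^{+}$ are exactly Poisson integrals of their $L^\infty$ boundary data, an extra term such as $c\,\Im z$ being unbounded, so no separate condition at $\infty$ is needed. A secondary point requiring care is the bookkeeping of branches: one must take $\log f$ and $\log p_J$ to be the arguments (imaginary parts in $[0,\pi]$ and $(0,\pi)$ respectively), so that $\log g=\log f-\log p_J$ is a genuine logarithm of $g$ and $u=\arg f-\arg p_J$ really equals $\Im\log g$. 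The degenerate possibility that $f$ is a negative real constant — the only constant compatible with the hypotheses — is covered by this discussion, since then $\arg f\equiv\pi$ and the argument above goes through verbatim.
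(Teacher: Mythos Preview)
Your proof is correct and takes a genuinely different route from the paper's. The paper reduces, via a conformal automorphism, to the case $J=(-\infty,0)$, then uses a limit $f(z-\varepsilon)\to f(z)$ to push $\sigma(f)$ away from $0$, and finally computes $f(z)/z$ explicitly from the Nevanlinna integral representation, checking by hand that $\Im(f(z)/z)\ge 0$. Your argument instead works with $u=\arg f-\arg p_J$ as a bounded harmonic function, reads off its boundary values ($0$ on $J$, in $[0,\pi]$ a.e.\ elsewhere since $p_J>0$ off $\overline J$), and invokes the Poisson representation of bounded harmonic functions to conclude $0\le u\le\pi$. This is the exponential/argument viewpoint of Aronszajn--Donoghue that the paper alludes to in the introduction and exploits in Section~5 and in the formula $k_O=e^{v}$ of Proposition~\ref{prop:krein-prod-properties}. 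What your approach buys is that no normalization of $J$ or limiting argument is needed, and the mechanism (Poisson integral of $[0,\pi]$-valued boundary data) is transparent; what the paper's approach buys is a concrete formula for the Nevanlinna data of $g$, which can be useful if one wants quantitative information about the factor.
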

\begin{proof}
Replacing the function $f$ by $f\circ\varphi$ for some conformal
automorphism of $\mathbb{C}^{+}$, we may assume that $J=(-\infty,0)$,
so that we need to prove that the function $f(z)/z$ belongs to $\mathcal{H}$.
Since 
\[
\frac{f(z)}{z}=\lim_{\varepsilon\downarrow0}\frac{f(z-\varepsilon)}{z},\quad z\in\mathbb{C}^{+},
\]
and $\mathcal{H}$ is closed under pointwise limits, we may also assume
that $\sigma(f)\subset[\varepsilon,+\infty]$ for some $\varepsilon>0$,
and $f(0)<0$. Thus $f$ can be written as
\[
f(z)=\alpha z+\beta+\int_{[\varepsilon,+\infty)}\frac{1+zt}{t-z}\, d\rho(t),\quad z\in\mathbb{C}^{+},
\]
for some $\alpha>0,\beta\in\mathbb{R}$, and some finite, positive,
Borel measure $\rho$ on $[\varepsilon,+\infty).$ We have then
\[
f(0)=\beta+\int_{[\varepsilon,+\infty)}\frac{1}{t}\, d\rho(t)<0,
\]
so that
\begin{eqnarray*}
\frac{f(z)}{z} & = & \alpha+\left[\beta+\int_{[\varepsilon,+\infty)}\frac{1}{t}\, d\rho(t)\right]\frac{1}{z}+\int_{[\varepsilon,+\infty)}\left[\frac{1+zt}{z(t-z)}-\frac{1}{zt}\right]\, d\rho(t)\\
 & = & \alpha+\left[\beta+\int_{[\varepsilon,+\infty)}\frac{1}{t}\, d\rho(t)\right]\frac{1}{z}+\int_{[\varepsilon,+\infty)}\frac{1+t^{2}}{t(t-z)}\, d\rho(t),
\end{eqnarray*}
and it is clear that this function has nonnegative imaginary part
for $z\in\mathbb{C}^{+}$.
\end{proof}
Our main factorization result follows. Recall that $\Omega(f)=(\mathbb{R}\cup\{\infty\})\setminus\sigma(f)$
and $\Gamma(f)=\{x\in\Omega(f)\colon f(x)<0\}$.
\begin{thm}
\label{thm:main-factorization}For every nonzero function $f\in\mathcal{H}$
there exists $g\in\mathcal{H}$ such that $f=k_{\Gamma(f)}g$. The
function $g$ has the following properties.
\begin{enumerate}
\item $\sigma(g)\subset\sigma(f)$.
\item $g(x)>0$ for every $x\in\Omega(g)$.
\item The set $\Omega(g)$ is Lebesgue regular.
\item $\Omega(f)=\Omega(k_{\Gamma(f)})\cap\Omega(g)$.
\end{enumerate}
\end{thm}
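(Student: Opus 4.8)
The plan is to strip off the Kre\u\i n factors $p_{J_n}$ one at a time using Lemma~\ref{lem:factor_a_single_p_J}, where $\Gamma(f)=\bigcup_{0\le n<N}J_n$, $J_n=(b_n,a_n)$, is the decomposition of $\Gamma(f)$ into its connected components. The case of constant $f$ is checked directly, so assume $f$ is nonconstant; conjugating $f$ by a conformal automorphism of $\mathbb{C}^{+}$ and carrying the conclusion back through Proposition~\ref{prop:krein-prod-properties}(7), we may also assume $\infty\in\sigma(f)$, so that the $J_n$ are disjoint intervals of $\mathbb{R}$. The fact that makes the peeling work is this: every left endpoint $b_n$ lies in $\sigma(f)$, and every right endpoint $a_n$ that lies in $\Omega(f)$ is a simple zero of $f$. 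Indeed, $f$ is real-analytic and, being nonconstant, strictly increasing on each component of $\Omega(f)\cap\mathbb{R}$ (the formula for $f'$ in Section~3 gives $f'>0$ there). Since $f<0$ on $(b_n,a_n)$, if $b_n\in\Omega(f)$ then $f(b_n)\le 0$ by continuity, and whether $f(b_n)<0$ or $f(b_n)=0$ monotonicity forces $f<0$ immediately to the left of $b_n$ as well; by the Lebesgue regularity of $\Gamma(f)$ (Corollary~\ref{cor:lebesgur-regularity-of_f>0}), which forbids two components from abutting at $b_n$, this contradicts $b_n$ being a left endpoint. For $a_n\in\Omega(f)$ the same computation gives $f(a_n)=0$ (the value cannot be negative, as that would enlarge $J_n$), and the zero is simple because $f'(a_n)>0$.

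With this in hand, set $f_0=f$ and $f_{m+1}=f_m/p_{J_m}$, and show by induction that $f_m\in\mathcal H$, $\sigma(f_m)\subseteq\sigma(f)$ (hence $\Omega(f)\subseteq\Omega(f_m)$), and $f_m<0$ on every $J_j$ with $j\ge m$. Lemma~\ref{lem:factor_a_single_p_J} applies at step $m$ because $J_m\subseteq\Omega(f)\subseteq\Omega(f_m)$ and $f_m<0$ on $J_m$; the induction hypothesis is preserved because $\sigma(p_{J_m})=\{b_m\}\subseteq\sigma(f)$, because the sole zero of $p_{J_m}$, at $a_m$, is cancelled by the simple zero of $f_m$ there whenever $a_m\in\Omega(f)$ (here Lebesgue regularity of $\Gamma(f)$ keeps the endpoints separated, so no earlier factor $p_{J_i}$ interferes), and because $p_{J_m}>0$ off $[b_m,a_m]$, hence on every $J_j$ with $j\ne m$. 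When $N<\infty$ this stops after finitely many steps with $g:=f_N\in\mathcal H$ and $f=p_{J_0}\cdots p_{J_{N-1}}g=k_{\Gamma(f)}g$. When $N=\infty$ we have $f=(p_{J_0}\cdots p_{J_{m-1}})f_m$ for every $m$; the partial products converge to $k_{\Gamma(f)}$ locally uniformly on $\mathbb{C}^{+}$ (Proposition~\ref{prop:krein-prod-properties}(1)), and $k_{\Gamma(f)}$ is zero-free there, so $f_m\to g:=f/k_{\Gamma(f)}$ locally uniformly, $g\in\mathcal H$, and again $f=k_{\Gamma(f)}g$.

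Property~(1) is now immediate: $\sigma(k_{\Gamma(f)})=X=\overline{\{b_n\}}\subseteq\sigma(f)$ by the fact above, so $f$ and $k_{\Gamma(f)}$ are real-analytic on $\Omega(f)$, and the only zeros of $k_{\Gamma(f)}$ there — the points $a_n\notin X$, by the last proposition of Section~3 — are simple and cancelled by simple zeros of $f$; hence $g=f/k_{\Gamma(f)}$ is real-analytic on $\Omega(f)$, i.e.\ $\sigma(g)\subseteq\sigma(f)$. For~(4): the inclusions $\sigma(k_{\Gamma(f)})\subseteq\sigma(f)$ and $\sigma(g)\subseteq\sigma(f)$ give $\Omega(f)\subseteq\Omega(k_{\Gamma(f)})\cap\Omega(g)$, while at any point of $\Omega(k_{\Gamma(f)})\cap\Omega(g)$ the product $k_{\Gamma(f)}g=f$ extends continuously to a real function, so that point lies in $\Omega(f)$ by the converse recalled in Section~3; in particular $\sigma(f)=X\cup\sigma(g)$. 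For~(3): granting~(2), $g>0$ on $\Omega(g)$ gives $-1/g\in\mathcal H$ with $\Gamma(-1/g)=\Omega(g)$, so Corollary~\ref{cor:lebesgur-regularity-of_f>0} shows $\Omega(g)$ is Lebesgue regular.

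The heart of the proof is~(2), namely $\Gamma(g)=\varnothing$, which I would establish by contradiction. On $\Omega(f)$ we have $f=k_{\Gamma(f)}g$, so, away from the at most countably many zeros of $k_{\Gamma(f)}g$ in $\Omega(f)$, the sets $\Gamma(f)\cap\Omega(f)$ and $(\Gamma(k_{\Gamma(f)})\triangle\Gamma(g))\cap\Omega(f)$ coincide. Since $\Gamma(k_{\Gamma(f)})=\Gamma(f)$ (the last proposition of Section~3, applicable because $\Gamma(f)$ is Lebesgue regular) and $\Gamma(f)\subseteq\Omega(f)$, cancelling symmetric differences shows that $\Gamma(g)\cap\Omega(f)$ has Lebesgue measure zero. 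If $\Gamma(g)$ were nonempty it would contain an interval $L$, and then the open set $L\cap\Omega(f)$ would have to be empty; hence $L\subseteq\sigma(f)=X\cup\sigma(g)$, and since $L\subseteq\Omega(g)$ this forces $L\subseteq X=\overline{\{b_n\}}$. But then some $b_n$ lies in the open set $L$, so $J_n\cap L$ is a nonempty subset of $\Omega(f)$ (as $J_n\subseteq\Gamma(f)\subseteq\Omega(f)$), contradicting $L\cap\Omega(f)=\varnothing$. Therefore $\Gamma(g)=\varnothing$. I expect the main obstacle to be precisely this last step — ruling out that $\Gamma(g)$ is concealed inside the closure $X$ of the left-endpoint set — with a secondary point of care being the zero/pole bookkeeping in the inductive invariant of the peeling step, where the Lebesgue regularity of $\Gamma(f)$ is the essential ingredient.
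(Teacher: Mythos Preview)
Your proof is correct and follows essentially the same approach as the paper: peel off the factors $p_{J_n}$ one at a time via Lemma~\ref{lem:factor_a_single_p_J}, pass to the limit when $N=\infty$, and then verify (1)--(4). The only noteworthy difference is stylistic---you front-load the facts $b_n\in\sigma(f)$ and ``$a_n\in\Omega(f)$ is a simple zero of $f$'' and carry a stronger inductive invariant, and for (2) you arrive at the key contradiction (an interval inside $X=\overline{\{b_n\}}$ must contain some $b_n$ and hence meet $J_n\subset\Omega(f)$) through (4) and a sign/symmetric-difference computation, whereas the paper reaches the same point more directly by first observing, during the case analysis for (1), that $g>0$ on all of $\Omega(f)$.
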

\begin{proof}
Denote by $(J_{n})_{0\le n<N}$ the connected components of $\Gamma(f)$,
so that 
\[
k_{\Gamma(f)}=\prod_{0\le n<N}p_{J_{n}}.
\]
 We first verify by induction that the function
\[
g_{k}=f/\prod_{0\le n<k}p_{J_{n}}
\]
belongs to $\mathcal{H}$ for every finite $k\le N$. The case $k=0$
is vacuously verified since $g_{0}=f$. The function $g_{k}$ has
the property that $J_{k+1}\subset\Gamma(g_{k})$, and thus the fact
that $g_{k+1}\in\mathcal{H}$ follows by an appplication of Lemma
\ref{lem:factor_a_single_p_J}. If $N$ is finite, we are done showing
that $g\in\mathcal{H}$. If $N$ is infinite, we have $g(z)=\lim_{n\to\infty}g_{n}(z)$
for all $z\in\mathbb{C}^{+}$, and we conclude again that $g\in\mathcal{H}$.

Assume next that $x\in\Omega(f)$. If $f(x)>0$ then $x$ is at positive
distance from $\Gamma(f)$, and therefore the product $k_{\Gamma(f)}$
converges on a neighborhood of $x$, and it takes positive values
at real points close to $x$. (This argument also works for $x=\infty$
with the usual interpretation of the word `close'.) It follows that
$x\in\Omega(g)$. If $f(x)<0$ then $k_{\Gamma(f)}$ is again analytic
in a neighborhood of $x$, and it takes negative values at real points
close to $x$. We conclude again that $x\in\Omega(g)$. If $f(x)=0$,
it follows that, for some $\varepsilon>0$, $(x-\varepsilon,x)\subset\Gamma(f)$
and $(x,x+\varepsilon)\cap\Gamma(f)=\varnothing$. In particular,
$f$ has a simple zero at $x$. In this case $k_{\Gamma(f)}$ also
has a simple zero at $x$, and the ratio $g=f/k_{\Gamma(f)}$ is positive
in $(x-\varepsilon,x+\varepsilon)$ and analytic near $x$. In particular,
$x\in\Omega(g)$. This verifies property (1).

The above argument also shows that $g(x)>0$ for every $x\in\Omega(f)$.
Assume now that $x\in\Omega(g)\setminus\Omega(f)$ and $g(x)<0$.
Since $k_{\Gamma(f)}=f/g$, we deduce that we have $x\in\sigma(k_{\Gamma(f)})$,
and therefore $x$ is an accumulation point of a sequence of endpoints
of some intervals $J_{n}$. Since $g$ is positive on each $J_{n}$,
we deduce $g(x)\ge0,$ a contradiction. Thus we must have $g(x)\ge0$
for $x\in\Omega(g)$. However, when a function $g$ in $\mathcal{H}$
vanishes at some point $x\in\Omega(g)$, the function $g$ must change
sign in a neighborhood of that point. This proves (2).

Property (3) follows from Corollary \ref{cor:lebesgur-regularity-of_f>0}
applied to the function $-1/g$.

The inclusion  $\Omega(f)\supset\Omega(k_{\Gamma(f)})\cap\Omega(g)$ is obvious.  Conversely, observe that any point $a\in\Omega(f)$ such that $f(a)=0$ is a simple zero of $f$ and of $k_{\Gamma(f)}$.  Indeed, there is an interval $(b,a)\subset\Gamma(f)$, and $a$ is isolated in $\partial\Gamma(f)$.  Moreover, $k_{\Gamma(f)}$ is  analytic and real in a neighborhood of $a$.  It follows that the quotient $g=f/k_{\Gamma(f)}$ is analytic and real on a neighborhood of $a$.  It is also clear that $f$ and $k_{\Gamma(f)}$ are analytic and nonzero in the neighborhood of any point  $a\in\Omega(f)$ such that $f(a)\ne0$. Thus both $k_{\Gamma(f)}$ and $g$ are analytic and real on $\Omega(f)$, thus verifying the opposite inclusion.
\end{proof}
The preceding result is most effective when $\sigma(f)$ is small.
\begin{cor}
\label{cor:sigma-of-measure-zero}Assume that $f\in\mathcal{H}$ and
$\sigma(f)$ has Lebesgue measure equal to zero. Then we have $f=ck_{\Gamma(f)}$
for some positive constant $c$.\end{cor}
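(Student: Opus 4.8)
The plan is to apply Theorem~\ref{thm:main-factorization} to write $f=k_{\Gamma(f)}g$ with $g\in\mathcal{H}$ enjoying properties (1)--(4) listed there, and then to show that the smallness of $\sigma(f)$ forces $g$ to reduce to a positive constant, so that $c$ can be taken to be the value of $g$.

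The first step is to record that $\sigma(g)\subset\sigma(f)$ by property (1), so $\sigma(g)$ has Lebesgue measure zero and $\Omega(g)$ has full Lebesgue measure in $\mathbb{R}$. Property (2) then gives $g(x)>0$ for almost every $x\in\mathbb{R}$; in particular $\Im g(x)=0$ for almost every $x$, so by the discussion following the Nevanlinna representation the measure $\rho$ in the representation (\ref{eq:nev-formule}) of $g$ is singular with respect to Lebesgue measure.

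The second step is to deduce that $\rho=0$. If $\rho\ne 0$, choose a bounded interval $J$ with $\rho(J)>0$; the restriction of $\rho$ to $J$ is nonzero and, being a restriction of a singular measure, is singular. Corollary~\ref{cor:unbounded_f} then tells us that $g$ is not essentially bounded below on $J$, which contradicts the inequality $g>0$ almost everywhere on $\mathbb{R}$. Hence $\rho=0$ and $g(z)=\alpha z+\beta$ for some $\alpha\ge 0$ and $\beta\in\mathbb{R}$. If $\alpha>0$ then $g(x)=\alpha x+\beta$ is negative for all sufficiently negative real $x$, again contradicting positivity almost everywhere; therefore $\alpha=0$, so $g$ is the constant $\beta$, and $\beta>0$ since $g>0$ almost everywhere. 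Taking $c=\beta$ finishes the argument.

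Every step above is a direct application of a result already proved, so I do not anticipate a genuine obstacle; the only point deserving a moment's attention is the passage from ``$\rho$ singular'' to the hypothesis of Corollary~\ref{cor:unbounded_f}, namely the existence of a bounded interval on which $\rho$ is nonzero and singular, and this is immediate since $\rho$ is finite and restrictions of singular measures are singular.
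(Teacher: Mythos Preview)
Your argument is correct. Both your proof and the paper's start from Theorem~\ref{thm:main-factorization} and then show that the factor $g$ must be a positive constant; the difference lies only in how that last step is carried out. The paper invokes property~(3) of Theorem~\ref{thm:main-factorization}: since $\Omega(g)$ is Lebesgue regular and its complement $\sigma(g)\subset\sigma(f)$ has measure zero, one gets $\Omega(g)=\mathbb{R}\cup\{\infty\}$ immediately, and the Nevanlinna representation then forces $g$ to be constant. You instead bypass property~(3) and appeal directly to Corollary~\ref{cor:unbounded_f} to rule out a nonzero singular $\rho$, and then eliminate $\alpha>0$ by hand. This is perfectly valid but somewhat redundant: Corollary~\ref{cor:unbounded_f} is exactly the ingredient used (via Corollary~\ref{cor:lebesgur-regularity-of_f>0}) to establish property~(3) in the first place, so you are in effect re-deriving a special case of a result already packaged for you. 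The paper's route is shorter; yours is more self-contained.
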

\begin{proof}
Consider the factorization $f=k_{\Gamma(f)}g$ provided by the preceding
theorem. The set $\Omega(g)$ is Lebesgue regular, and hence $\Omega(g)=\mathbb{R}\cup\{\infty\}$
because its complement has measure zero. The Nevanlinna representation
of $g$ shows then that $g$ is a constant function, and the constant
is positive by Theorem \ref{thm:main-factorization}(2).
\end{proof}
The preceding corollary recovers Kre\u\i n's original factorization
result, as well as the extensions considered in \cite{chal-et-al}. Note that
the measure $\rho$ in the Nevanlinna representations of the functions
considered in \cite{chal-et-al} is a discrete measure whose support has only
finitely many accumulation points. Our corollary covers functions
for which the support of $\rho$ is an arbitrary closed set of Lebesgue
measure zero, for instance the ternary Cantor set.

\section{Functions $g$ Positive on $\Omega(g)$}

In this section we analyze more carefully the factor $g$ in the decomposition
$f=k_{\Gamma(f)}g$. The characteristic properties of these functions
are that $\Omega(g)$ is Lebesgue regular, and $g(x)>0$ for $x\in\Omega(g)$.
Such a function can then be written as $g=e^{h}$ for some function
$f\in\mathcal{H}$ such that 
\[
0<\Im h(z)<\pi
\]
for all $z\in\mathbb{C}^{+}$. Moreover, the function $h$ can be
continued analytically across $\Omega(g)$, and $h(x)\in\mathbb{R}$
for $x\in\Omega(g)$. Conversely, $g$ can be continued analytically
across $\Omega(h)$, and $g(x)>0$ for $x\in\Omega(h)$. We conclude
that $\Omega(h)=\Omega(g)$ and $\sigma(h)=\sigma(g)$. These facts
imply that the Nevanlinna representation of the function $h$ is of
the form
\[
h(z)=\gamma+\int_{\sigma(g)}\frac{1+zt}{t-z}\cdot\frac{\psi(t)}{1+t^{2}}\, dt,\quad z\in\mathbb{C}^{+},
\]
where $\gamma\in\mathbb R$, $\psi:\sigma(g)\to(0,1]$ is a measurable function, and there is no open interval $J$ such that $\psi(t)=1$ almost everywhere on $J$. The fact that $\sigma(h)=\sigma(g)$ amounts
to saying that the support of the measure $\psi(t)\, dt$ is equal
to $\sigma(g)$ or, equivalently since $\Omega(g)$ is Lebesgue regular,
$\psi(t)>0$ for almost every $t\in\sigma(g)$.

Conversely, assume that $\sigma\subset\mathbb{R}$ is a closed set
such that $\mathbb{R}\setminus\sigma$ is Lebesgue regular, and $\varphi:\sigma\to(0,1]$
is measurable. We can then define a function $v\in\mathcal{H}$ by
setting
\[
v(z)=\int_{\sigma}\frac{1+zt}{t-z}\cdot\frac{\varphi(t)}{1+t^{2}}\, dt,\quad z\in\mathbb{C}^{+}.
\]
 This function satisfies
\[
\Im v(z)=\Im z\int_{\sigma}\frac{\varphi(t)}{|t-z|^{2}}\, dt\le\Im z\int_{\sigma}\frac{dt}{|t-z|^{2}}\le\Im z\int_{-\infty}^{\infty}\frac{dt}{|t-z|^{2}}=\pi.
\]
Moreover, $\Gamma(g)=\varnothing$ provided that $\varphi$ is not equal to $1$ almost everywhere on any open interval.

We summarize these observations below.
\begin{thm}
\label{thm:additional-info-about-g}Assume that $f\in\mathcal{H}$.
There exist a constant $\gamma\in\mathbb{R}$ and a measurable function
$\psi:\sigma(f)\to[0,1]$ such that
\[
f(z)=k_{\Gamma(f)}(z)e^{h(z)},\quad z\in\mathbb{C}^{+},
\]
 where $h\in\mathcal{H}$ is defined by
\[
h(z)=\gamma+\int_{\sigma(f)}\frac{1+zt}{t-z}\cdot\frac{\psi(t)}{1+t^{2}}\, dt,\quad z\in\mathbb{C}^{+}.
\]
Conversely, given a closed set $\sigma\subset\mathbb{R}$, an open
subset $O\subset(\mathbb{R}\cup\{\infty\})\setminus\sigma$, and a
measurable function $\varphi:\sigma\to[0,1]$ such that $\phi$ is not equal to $1$ almost everywhere on any open interval, the function
\[
k_{O}(z)e^{v(z)},\quad z\in\mathbb{C}^{+},
\]
where $v$ is defined by
\[
v(z)=\int_{\sigma}\frac{1+zt}{t-z}\cdot\frac{\varphi(t)}{1+t^{2}}\, dt,\quad z\in\mathbb{C}^{+},
\]
belongs to the class $\mathcal{H}$, and $e^{v(t)}>0$ for every $t\in\Gamma(e^v)$.\end{thm}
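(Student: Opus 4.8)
This theorem essentially collects the observations made just before its statement, so the plan is to assemble those, supplying the few missing verifications. For the \emph{direct part}, apply Theorem~\ref{thm:main-factorization} to write $f=k_{\Gamma(f)}g$, where $g\in\mathcal H$ satisfies $g(x)>0$ for $x\in\Omega(g)$, $\Omega(g)$ is Lebesgue regular, $\sigma(g)\subset\sigma(f)$, and $\Omega(f)=\Omega(k_{\Gamma(f)})\cap\Omega(g)$. Since $g$ has the two defining properties of the functions treated in this section, the discussion above gives $g=e^{h}$ with $h\in\mathcal H$, $0\le\Im h\le\pi$, $\Omega(h)=\Omega(g)$, and a Nevanlinna representation $h(z)=\gamma+\int_{\sigma(g)}\frac{1+zt}{t-z}\cdot\frac{\psi(t)}{1+t^{2}}\,dt$ with $\gamma\in\mathbb R$ and measurable $\psi\colon\sigma(g)\to(0,1]$. (To reprove this without quoting the discussion: $h=\log g$, branch $0\le\Im h\le\pi$, continues analytically and real-valued across $\Omega(g)$ because $g$ does so positively; then $\Im h$ is a positive harmonic function bounded by $\pi$, hence the Poisson integral of some $u\in L^{\infty}(\mathbb R)$ with $0\le u\le\pi$, which forces the linear coefficient of the Nevanlinna representation of $h$ to vanish and its measure to equal $u(t)\,dt/\pi(1+t^{2})$; put $\psi=u/\pi$.) Extending $\psi$ by $0$ from $\sigma(g)$ to $\sigma(f)$ changes neither the integral nor $h$, so the representation holds with $\psi\colon\sigma(f)\to[0,1]$, and $f=k_{\Gamma(f)}e^{h}$, as required.

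For the \emph{converse part}, let $d\rho(t)=\mathbf 1_{\sigma}(t)\varphi(t)\,dt/(1+t^{2})$, a finite positive measure; then $v$ is the Nevanlinna integral of $\rho$ with vanishing linear and constant terms, so $v\in\mathcal H$, and $\Im v(z)=\Im z\int_{\sigma}\varphi(t)\,|t-z|^{-2}\,dt$ for $z\in\mathbb C^{+}$. The decisive point is that $\arg k_{O}(z)$ equals the angle $\Im z\int_{O}|t-z|^{-2}\,dt$ subtended by $O$ at $z$, so that, because $O$ and $\sigma$ are disjoint and $\varphi\le1$,
\[
\arg k_{O}(z)+\Im v(z)\ \le\ \Im z\int_{O\cup\sigma}\frac{dt}{|t-z|^{2}}\ \le\ \Im z\int_{\mathbb R}\frac{dt}{|t-z|^{2}}\ =\ \pi .
\]
Since $\arg e^{v(z)}=\Im v(z)$ and both $k_{O}$ and $e^{v}$ are non-vanishing on $\mathbb C^{+}$, the product $k_{O}(z)e^{v(z)}$ has argument in $[0,\pi]$ and never vanishes, hence maps $\mathbb C^{+}$ into $\overline{\mathbb C^{+}}$; being analytic, $k_{O}e^{v}\in\mathcal H$.

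It remains to check that $\Gamma(e^{v})=\varnothing$, which makes the asserted inequality for $e^{v}$ vacuously true. Again $e^{v}\in\mathcal H$ since $0\le\Im v\le\pi$; if $\Omega(e^{v})=\varnothing$ there is nothing to do, and otherwise let $J$ be a connected component of $\Omega(e^{v})$ (an open subinterval of $\mathbb R\cup\{\infty\}$; the point $\infty$, if it occurs, is absorbed into $J$ after conjugating with a conformal automorphism, cf.\ Proposition~\ref{prop:krein-prod-properties}(7)), across which $e^{v}$ continues analytically and real-valued. For almost every $x\in J$ the nontangential limit $v(x)$ exists and, by Fatou's theorem for Poisson integrals, $\Im v(x)=\pi\mathbf 1_{\sigma}(x)\varphi(x)$; as $e^{v(x)}$ is real, $\Im v(x)\in\pi\mathbb Z\cap[0,\pi]=\{0,\pi\}$, so $\mathbf 1_{\sigma}(x)\varphi(x)\in\{0,1\}$ a.e.\ on $J$, with $e^{v}(x)>0$ or $e^{v}(x)<0$ accordingly. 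Were $\{x\in J:e^{v}(x)<0\}$ nonempty it would, being open, contain an interval on which $\varphi=1$ almost everywhere, contrary to hypothesis; hence $e^{v}\ge0$ on $J$. Finally, $e^{v}$ is either the constant $1$ (the only constant value of $e^{v}$ compatible with $\Re v(i)=0$, $\Im v\le\pi$, the hypothesis on $\varphi$, and $\Omega(e^{v})\neq\varnothing$; in this case the conclusion is immediate) or non-constant, and in the latter case its zeros off $\sigma(e^{v})$ are simple by the remarks in Section~3, hence sign changes, so $e^{v}\ge0$ on $J$ forces $e^{v}>0$ on $J$. Therefore $e^{v}>0$ on all of $\Omega(e^{v})$.

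The work lies in two places. In the direct part, if one does not simply quote the preceding discussion, the one non-elementary step is identifying bounded positive harmonic functions on $\mathbb C^{+}$ as Poisson integrals of $L^{\infty}$ boundary data, which is how boundedness of $\Im h$ turns into absolute continuity of the Nevanlinna measure of $h$. In the converse part, the delicate point is $\Gamma(e^{v})=\varnothing$ under the present hypotheses (which, unlike the discussion above, impose no regularity on $\mathbb R\setminus\sigma$ and allow $\varphi$ to vanish): here one must combine the identification of the boundary values $\Im v(x)=\pi\mathbf 1_{\sigma}(x)\varphi(x)$ with the fact, recalled in Section~3, that a function in $\mathcal H$ has only simple zeros on intervals disjoint from its spectrum.
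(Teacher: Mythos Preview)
Your proof is correct and follows the paper's approach: the direct part (quoting Theorem~\ref{thm:main-factorization} and the preceding discussion) and the angle-sum estimate $\arg k_O(z)+\Im v(z)\le\pi$ showing $k_O e^{v}\in\mathcal H$ coincide with the paper's treatment. You actually go beyond the paper by supplying a careful verification that $\Gamma(e^{v})=\varnothing$ via the a.e.\ boundary identity $\Im v(x)=\pi\mathbf 1_{\sigma}(x)\varphi(x)$ together with the simple-zero property from Section~3, whereas the paper's written proof only establishes $k_O e^{v}\in\mathcal H$ and defers the positivity of $e^{v}$ on $\Omega(e^{v})$ to the informal discussion preceding the statement (stated there under slightly stronger hypotheses).
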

\begin{proof}
We only need to verify the last assertion. Recall that the argument
of $k_{O}(z)$ equals the angle subtended by the set $\Gamma$ at
$z$, that is
\[
\Im z\int_{O}\frac{dt}{|t-z|^{2}}.
\]
 As shown above, the argment of $e^{v(z)}$ is at most 
\[
\Im z\int_{\sigma}\frac{dt}{|t-z|^{2}},
\]
and the sum of these two numbers is at most $\pi$ because $\sigma\cap O=\varnothing$.
It follows that the argument of $k_{\Gamma}e^{v}$ is at most equal
to $\pi$, and therefore this function belongs to $\mathcal{H}$.
\end{proof}

\section{Interpolation}

A natural question arises as to which pairs $(\Omega,O)$ of open
subsets of $\mathbb{R}\cup\{\infty\}$ are of the form $(\Omega(f),\Gamma(f))$.

\begin{prop}
\label{prop:which-omega-and-gamma-occur}Consider open sets $\Omega$
and $O=\bigcup_{0\le n<N}(b_n,a_n)$ in $\mathbb{R}\cup\{\infty\}$. Denote by $X$ the closure in $\mathbb{R}\cup\{\infty\}$ of the set $\{b_n:0\le n<N\}$, and by $\Omega_1$ the Lebesgue regularization of $\Omega$. The following conditions are equivalent.
\begin{enumerate}
\item There exists a function $f\in\mathcal{H}$ such that $\Omega(f)=\Omega$
and $\Gamma(f)=O$.
\item The sets $\Omega$ and $O$ satisfy the following three requirements:

\begin{enumerate}
\item $O\subset\Omega$;
\item $O$ is Lebesgue regular;
\item $\Omega=\Omega_1\setminus X$.
\end{enumerate}
\end{enumerate}
\end{prop}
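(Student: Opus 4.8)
Below is the plan I would follow.

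The plan is to prove the two implications separately, using the factorization of Theorem~\ref{thm:main-factorization} for $(1)\Rightarrow(2)$ and the explicit construction of Theorem~\ref{thm:additional-info-about-g} for $(2)\Rightarrow(1)$. Throughout I set $\Sigma=(\mathbb{R}\cup\{\infty\})\setminus\Omega$ and $\Sigma_{1}=(\mathbb{R}\cup\{\infty\})\setminus\Omega_{1}$, so that $\Sigma_{1}$ is closed, $\Sigma_{1}\subset\Sigma$, the difference $\Sigma\setminus\Sigma_{1}=\Omega_{1}\setminus\Omega$ has Lebesgue measure zero, and---because $\Omega_{1}$ is Lebesgue regular---every neighbourhood of every point of $\Sigma_{1}$ meets $\Sigma_{1}$ in a set of positive measure. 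The case $\Omega=\varnothing$ forces $O=X=\varnothing$ and is trivial, so I may assume $\Omega\neq\varnothing$; after applying a suitable conformal automorphism of $\mathbb{C}^{+}$ I may also assume $\infty\in\Omega$ (hence $\Sigma$, and a fortiori $\Sigma_{1}$, is a bounded subset of $\mathbb{R}$), since conditions (a)--(c) and the property in (1) are invariant under such a change by Proposition~\ref{prop:krein-prod-properties}(7) and the behaviour of $\sigma$, $\Gamma$, and the Lebesgue regularization under automorphisms.

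For $(1)\Rightarrow(2)$: condition (a) is immediate from $\Gamma(f)\subset\Omega(f)$, and (b) is Corollary~\ref{cor:lebesgur-regularity-of_f>0}. For (c) I would use Theorem~\ref{thm:main-factorization} to write $f=k_{O}g$ with $\Omega(f)=\Omega(k_{O})\cap\Omega(g)$ and $\Omega(g)$ Lebesgue regular; since $O$ is Lebesgue regular, $\sigma(k_{O})=X$, so $\Omega(k_{O})=(\mathbb{R}\cup\{\infty\})\setminus X$ and therefore $\Omega=\Omega(g)\setminus X$. From $\Omega\subset\Omega_{1}$ and $\Omega\cap X=\varnothing$ one gets $\Omega\subset\Omega_{1}\setminus X$; conversely, if $y\in\Omega_{1}\setminus X$, choose $\delta>0$ with $\bigl|(y-\delta,y+\delta)\setminus\Omega\bigr|=0$ and, shrinking $\delta$ since $X$ is closed, with $(y-\delta,y+\delta)\cap X=\varnothing$. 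Then $\Omega$ and $\Omega(g)$ coincide on $(y-\delta,y+\delta)$, so $\bigl|(y-\delta,y+\delta)\setminus\Omega(g)\bigr|=0$, and Lebesgue regularity of $\Omega(g)$ forces $y\in\Omega(g)\setminus X=\Omega$. This gives $\Omega=\Omega_{1}\setminus X$.

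For $(2)\Rightarrow(1)$ I would construct $f$ directly. Note first that by (c) we have $\Sigma=\Sigma_{1}\cup X$. Put $g=e^{v}$, where
\[
v(z)=\int_{\Sigma_{1}}\frac{1+zt}{t-z}\cdot\frac{dt}{2(1+t^{2})},\qquad z\in\mathbb{C}^{+}.
\]
By the converse part of Theorem~\ref{thm:additional-info-about-g} (taking $O=\varnothing$, $\varphi\equiv\tfrac12$) we get $g\in\mathcal{H}$, $\Gamma(g)=\varnothing$, and $g(x)=e^{v(x)}>0$ for $x\in\Omega(g)$; moreover $0<\Im v<\pi/2$ on $\mathbb{C}^{+}$, so $\sigma(g)=\sigma(v)$ is the closed support of $\tfrac12(1+t^{2})^{-1}\mathbf{1}_{\Sigma_{1}}\,dt$, which is $\Sigma_{1}$ because $\Omega_{1}$ is Lebesgue regular; thus $\Omega(g)=\Omega_{1}$. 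Since $O\subset\Omega\subset\Omega_{1}$, the set $O$ is disjoint from $\Sigma_{1}$, so Theorem~\ref{thm:additional-info-about-g} also gives $f:=k_{O}g\in\mathcal{H}$. As $k_{O}$ is analytic and real on $(\mathbb{R}\cup\{\infty\})\setminus X$ and $g$ is analytic and positive on $\Omega_{1}=(\mathbb{R}\cup\{\infty\})\setminus\Sigma_{1}$, the product $f$ is analytic and real on $(\mathbb{R}\cup\{\infty\})\setminus(X\cup\Sigma_{1})=\Omega$, whence $\sigma(f)\subset\Sigma$; and $k_{O}(x)<0$, $g(x)>0$ for $x\in O$ give $O\subset\Gamma(f)$.

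The remaining---and main---step is the reverse inclusion $\Sigma\subset\sigma(f)$, i.e.\ $\Omega(f)\subset\Omega$. For $x\in X\setminus\Sigma_{1}=X\cap\Omega_{1}$ this is easy: there $g$ is analytic and strictly positive, and $k_{O}=f/g$ on $\mathbb{C}^{+}$, so if $f$ extended analytically and real-valuedly across $x$ then so would $k_{O}$, contradicting $x\in\sigma(k_{O})=X$. The genuine obstacle is $\Sigma_{1}\subset\sigma(f)$, and here I would pass to boundary values on the real line: since $f=k_{O}g\in\mathcal{H}$ and $O\cap\Sigma_{1}=\varnothing$, the arguments add, $\arg f=\arg k_{O}+\arg g$, where $\arg k_{O}(t+i\varepsilon)=\varepsilon\int_{O}\frac{ds}{(s-t)^{2}+\varepsilon^{2}}$ tends to $0$ and $\arg g(t+i\varepsilon)=\Im v(t+i\varepsilon)=\tfrac12\varepsilon\int_{\Sigma_{1}}\frac{ds}{(s-t)^{2}+\varepsilon^{2}}$ tends to $\pi/2$ as $\varepsilon\downarrow0$ at every density point $t$ of $\Sigma_{1}$ (which is a point of density zero of $O$); hence $\arg f(t)=\pi/2$, so $\Im f(t)>0$, for almost every $t\in\Sigma_{1}$. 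Consequently the absolutely continuous part $\frac{\Im f(t)}{\pi(1+t^{2})}\,dt$ of the Nevanlinna measure of $f$ is positive almost everywhere on $\Sigma_{1}$, and since every neighbourhood of every point of $\Sigma_{1}$ meets $\Sigma_{1}$ in positive measure, that measure charges every neighbourhood of every point of $\Sigma_{1}$; therefore $\Sigma_{1}$ lies in the support of the measure, i.e.\ $\Sigma_{1}\subset\sigma(f)$. Thus $\sigma(f)=\Sigma$, i.e.\ $\Omega(f)=\Omega$, and then for $x\in\Omega$ we have $f(x)=k_{O}(x)g(x)$ with $g(x)>0$, so $f(x)<0$ exactly when $x\in O$, which gives $\Gamma(f)=O$. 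I expect the crux to be precisely this boundary-value computation establishing $\Sigma_{1}\subset\sigma(f)$; the inclusion $\sigma(f)\subset\Sigma$, the case $X\cap\Omega_{1}$, and the reduction to $\infty\in\Omega$ are comparatively routine.
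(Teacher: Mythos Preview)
Your proof is correct and follows the same overall strategy as the paper: for $(1)\Rightarrow(2)$ you both invoke the factorization theorem, and for $(2)\Rightarrow(1)$ you both construct $f=k_{O}\,e^{v}$ with $v$ given by the integral over $\Sigma_{1}=(\mathbb{R}\cup\{\infty\})\setminus\Omega_{1}$ with weight $\tfrac12$. The one substantive difference is in verifying $\Omega(f)=\Omega$ in the converse direction. The paper simply asserts $\Gamma(f)=O$ and $\Omega(g)=\Omega_{1}$ and then appeals to Theorem~\ref{thm:main-factorization}(4) to conclude $\Omega(f)=\Omega(k_{O})\cap\Omega(g)=\Omega_{1}\setminus X$. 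You instead supply a direct boundary-value computation: at almost every density point of $\Sigma_{1}$ the Poisson extension of $\mathbf{1}_{O}$ tends to $0$ and that of $\tfrac12\mathbf{1}_{\Sigma_{1}}$ tends to $\tfrac{\pi}{2}$, so $\arg f(t)=\tfrac{\pi}{2}$ and hence $\Im f(t)>0$ almost everywhere on $\Sigma_{1}$, forcing $\Sigma_{1}\subset\sigma(f)$. Your route is more self-contained here, since the paper's invocation of Theorem~\ref{thm:main-factorization}(4) presupposes $\Gamma(f)=O$, and establishing that equality already requires knowing $\Omega(f)\subset\Omega$; your Poisson-kernel argument avoids this circularity and handles the awkward case $x\in\Sigma_{1}\cap X$ uniformly.
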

\begin{proof}
Assume first $f\in\mathcal{H}$ satisfies (1), and factor $f=k_Og$ for some $g\in\mathcal{H}$.  Condition (a) is obviously satisfied.  The sets $O$ and $\Omega(g)$  are Lebesgue regular by Corollary \ref{cor:lebesgur-regularity-of_f>0} and Theorem \ref{thm:main-factorization}(3), and
\[
\Omega=\Omega(f)=\Omega(g)\cap\Omega(k_O)=\Omega(g)\setminus X.
\]
Condition (c) now follows because $\Omega(g)\supset\Omega_1$ by regularity, while $\Omega\cap X=\varnothing$. 

Conversely, assume that conditions (a--c) are satisfied, and set $f=k_Og$, where $g=e^v$ and $v$ is defined as
\[v(z)=\frac12\int_{\mathbb{R}\setminus\Omega_1}\frac{1-zt}{t-z}\cdot\frac{dt}{1+t^2}\quad z\in\mathbb{C}^+.\]
We have $\Gamma(f)=O$ and $\Omega(g)=\Omega_1$, while $\Omega(k_O)=\mathbb R\setminus X$.  It is now easy to conclude using Theorem \ref{thm:main-factorization}(4) that \[\Omega(f)=\Omega(g)\cap\Omega(k_O)=\Omega_1\cap\Omega(k_O)=\Omega_1\setminus X=\Omega.\]
We conclude that $f$ satisfies (1).
\end{proof}
It is easy to construct examples of sets $\Omega$ and $O$ satisfying conditions (a-c) of the preceding theorem.  Consider, for instance the ternary Cantor set $C$, obtained by removing $2^{k-1}$ intervals of length $3^{-k}$ from the interval $[0,1]$ for $k\ge1$, and set $\Omega=\mathbb{R}\setminus C$.  Denote by $(b_n,c_n)$ these intervals, and select for each $n$ a point $a_n\in(b_n,c_n)$.  Then $O=\bigcup_n(b_n,a_n)$ is a Lebesgue regular open set and $k_O$ has zeros at the points $a_n$ and essential singularities at all the points of $C$.  If we consider instead the union $O$ of all intervals $(b_n,c_n)$ of length $3^{-k}$ with $k$ even, then $k_O$ has no zeros or poles and it is still singular at all points in $C$.  In general, if $\Omega=\Omega(f),O=\Gamma(f)$,  and $(b,c)$ is a connected component of $\Omega$, the intersection $(b,c)\cap O$ can be any interval of the form $(b,a)$ for some $a\in[b,c]$.

The following result can be viewed as an interpolation result which yields, in the special case of a finite set $Y$, the results of \cite{chal-et-al}.  The set $A$ in the statement is the proposed set of zeros of a function in $\mathcal H$, $B$ is the proposed set of poles, and $Y$ is a set where the function is allowed to be essentially singular.  The conditions in assertion (3) could be summarized by saying that the points of $A$ and $B$ are \emph{interlaced} in each component of the complement of $Y$.

\begin{thm}
Consider pairwise disjoint sets $A,B,Y\subset\mathbb R$ such that $Y$ is closed, and the limit points of $A\cup B$ belong to $Y$.  The following conditions are equivalent.
\begin{enumerate}
\item There exists a function $f\in\mathcal{H}$ such that $A\subset\{x\in\Omega(f):f(x)=0\}\subset A\cup Y$ and $B\subset\sigma(f)\subset B\cup Y$.
\item There exists a Lebesgue regular open set $O=\bigcup_{0\le n<N}(b_n,a_n)$ such that the intervals $(b_n,a_n)$ are pairwise disjoint, $A\subset\{a_n:0\le n<N\}\subset A\cup Y$, and $B\subset\{b_n:0\le n<N\}\subset B\cup Y$.
\item For every connected component $J$ of $(\mathbb{R}\cup\{\infty\})\setminus Y$, the following two conditions are satisfied.\begin{enumerate}
\item If $a$ and $a'$ are two distinct points in $A\cap J$, there is a point $b\in B$ between $a$ and $a'$.
\item If $b$ and $b'$ are two distinct points in $B\cap J$, there is a point $a\in A$ between $b$ and $b'$.
\end{enumerate}
\end{enumerate}
If these equivalent conditions are satisfied and $Y$ is of Lebesgue measure equal to zero, then every function $f$ satisfying \emph{(1)} is of the form $f=ck_O$, where $c>0$ and $O$ is one of the open sets satisfying \emph{(2)}.
\end{thm}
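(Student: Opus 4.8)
I would prove $(1)\Rightarrow(2)$, $(2)\Rightarrow(1)$, $(2)\Rightarrow(3)$ and $(3)\Rightarrow(2)$, obtaining both equivalences, and then deduce the last assertion. For the implications starting from $(1)$ one may assume $f$ is nonconstant: a constant $f\in\mathcal H$ satisfying $(1)$ must be a nonzero real constant, which has $\{x\in\Omega(f):f(x)=0\}=\sigma(f)=\varnothing$ and hence forces $A=B=\varnothing$, a trivial case. For $(1)\Rightarrow(2)$, take $O=\Gamma(f)=\bigcup_{0\le n<N}(b_n,a_n)$, Lebesgue regular by Corollary~\ref{cor:lebesgur-regularity-of_f>0}. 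First, $\partial\Gamma(f)\subseteq A\cup B\cup Y$: a boundary point $p$ lies in $\sigma(f)\subseteq B\cup Y$, or else $p\in\Omega(f)$, where continuity gives $f(p)\le 0$, and $f(p)<0$ is impossible (it would put $p$ in the open set $\Gamma(f)$), so $f(p)=0$ and $p$ is a zero of $f$ in $\Omega(f)$, whence $p\in A\cup Y$. Thus every $a_n,b_n\in A\cup B\cup Y$. Next, each $a\in A$ is a simple zero of $f$ in $\Omega(f)$ ($f$ being strictly increasing off $\sigma(f)$), so $f<0$ immediately to the left and $f>0$ immediately to the right of $a$; hence $a$ is a right endpoint $a_n$, and no $b_n$ can lie in $A$. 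Each $b\in B$ is isolated in $\sigma(f)$ — it is a limit point of neither $A\cup B$ nor $Y$, since the limit points of both lie in $Y$, which is disjoint from $B$ — hence a simple pole of $f$, with $f\to-\infty$ to its right and $f\to+\infty$ to its left; hence $b$ is a left endpoint $b_n$, and no $a_n$ can lie in $B$. Therefore $A\subseteq\{a_n\}\subseteq A\cup Y$ and $B\subseteq\{b_n\}\subseteq B\cup Y$.

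\textbf{The implications from $(2)$.} For $(2)\Rightarrow(1)$ I would take $f=k_O$ and use the proposition describing $\sigma$, $\Gamma$ and the zeros of a Kre\u\i n product: $\sigma(k_O)=X=\overline{\{b_n\}}$, $\Gamma(k_O)=O$, and the zeros of $k_O$ in $\Omega(k_O)=(\mathbb R\cup\{\infty\})\setminus X$ are exactly the points $a_n\notin X$. Since $\{b_n\}\subseteq B\cup Y$ and the limit points of $B$ lie in $Y$, one has $X\subseteq\overline{B\cup Y}=B\cup Y$, so $B\subseteq\sigma(f)\subseteq B\cup Y$; and since $A$ is disjoint from $B\cup Y\supseteq X$, $A\subseteq\{a_n:a_n\notin X\}\subseteq\{a_n\}\subseteq A\cup Y$, giving $A\subseteq\{x\in\Omega(f):f(x)=0\}\subseteq A\cup Y$. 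For $(2)\Rightarrow(3)$, let $a,a'$ be consecutive points of $A\cap J$ and let $(b_n,a_n)$ be the unique component of $O$ with $a_n=a'$. I claim $b_n$ lies strictly between $a$ and $a'$ inside $J$: otherwise $a$ would lie in the open interval $(b_n,a')$ (the equality $b_n=a$ being excluded because $a\in A$ while $b_n\in B\cup Y$), and then the component of $O$ with right endpoint $a$ would overlap $(b_n,a')$ near $a$, contradicting disjointness of the components. Since $J\cap Y=\varnothing$, $b_n\in B$, which is (3)(a); (3)(b) is symmetric, via left endpoints. The component of $(\mathbb R\cup\{\infty\})\setminus Y$ through $\infty$ is treated by the same argument with the stated conventions for intervals.

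\textbf{The construction $(3)\Rightarrow(2)$.} This is the substantive step. Since $A$ and $B$ have all their limit points in the disjoint set $Y$, both are countable; so after precomposing with a conformal automorphism of $\mathbb C^+$ that sends $\infty$ to a point outside $A\cup B\cup Y$, I may assume $Y$ is compact and $\infty$ is not a limit point of $A\cup B$, so that every component of $(\mathbb R\cup\{\infty\})\setminus Y$ other than the one containing $\infty$ is a bounded interval $(\ell,r)$ with $\ell,r\in Y$. In each component $J$ the set $(A\cup B)\cap J$ is discrete in $J$ (its limit points lie in $Y$), and by (3) its points alternate between $A$ and $B$. I build $O$ component by component: include the interval $(b,a)$ for each pair $b<a$ consecutive in $(A\cup B)\cap J$ with $b\in B$ and $a\in A$; if $(A\cup B)\cap J$ has a least element lying in $A$, join it by one further interval to the left endpoint of $J$ (a point of $Y$), and symmetrically if its greatest element lies in $B$; the component through $\infty$ is treated cyclically, using that (3) forces $A$ and $B$ to be balanced on it. By construction $A\subseteq\{a_n\}\subseteq A\cup Y$, $B\subseteq\{b_n\}\subseteq B\cup Y$, and the intervals are pairwise disjoint. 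For Lebesgue regularity one must ensure no two intervals share an endpoint; the only way this can occur is at a point $p\in Y$ that is a right endpoint of one component and a left endpoint of the next and that needs a joining interval on each side, and there one replaces the two joining intervals by the single interval crossing $p$. Regularity then holds because in each component the constructed intervals and the complementary gaps alternate, both of positive length, so the complement of $O$ has positive measure in every neighbourhood of a point not in $O$. I expect this bookkeeping near the points of $Y$ and at $\infty$ to be the only real obstacle; all the analytic content sits in the first two implications.

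\textbf{The final assertion.} Suppose $Y$ has Lebesgue measure zero and $f\in\mathcal H$ satisfies $(1)$. As in the proof of $(1)\Rightarrow(2)$, every point of $B$ is isolated in $\sigma(f)$, so $B$ is countable, and hence $\sigma(f)\subseteq B\cup Y$ has Lebesgue measure zero. Corollary~\ref{cor:sigma-of-measure-zero} then gives $f=ck_{\Gamma(f)}$ for some $c>0$, and the open set $O=\Gamma(f)$ satisfies condition $(2)$ by the proof of $(1)\Rightarrow(2)$.
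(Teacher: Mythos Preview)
Your proposal is correct and follows essentially the same route as the paper: take $O=\Gamma(f)$ for $(1)\Rightarrow(2)$, take $f=k_O$ for $(2)\Rightarrow(1)$, use the disjointness of the components of $O$ for $(2)\Leftrightarrow(3)$, and invoke Corollary~\ref{cor:sigma-of-measure-zero} for the final assertion. Your treatment of $(1)\Rightarrow(2)$ and $(2)\Rightarrow(3)$ is in fact more detailed than the paper's (which is quite terse at those points), and your conformal reduction in $(3)\Rightarrow(2)$ is an extra normalization the paper does not make, but it is harmless.
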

\begin{proof}
Assume first that $f$ satisfies the conditions in (1),  set $O=\Gamma(f)$, and write $O=\bigcup_{0\le n<N}(b_n,a_n)$ with pairwise disjoint intervals $(b_n,a_n)$.  Then $f$ has a zero at each point $a\in A$, and therefore $a=a_n$ for some $n$.  On the other hand, if $a\notin A\cup Y$ then $f$ is (at worst) meromorphic in a neighborhood of $a$ and $f(a)\ne0$, so that $a\ne a_n$ for all $n$.  Similarly, if $b\in B$ then $b$ is isolated in $B\cup Y$, in particular $b$ is isolated in $\sigma(f)$.  It follows that $b$ is a pole of $f$, and therefore $b=b_n$ for some $n$.  Finally, if $b\notin B\cup Y$ then $b\notin\sigma(f)$ so that $f$ is analytic at $b$.  We conclude that $b\ne b_n$ for all $n$.  We have then verified that $O$ satisfies the conditions in (2).

Conversely, assume that $O$ is a Lebesgue regular open set satisfying the conditions in (2), and set $f=k_O$.  The set $\sigma(f)$ is the closure of $\{b_n:0\le n<N\}$ and it therefore contained in $B\cup Y$.  Every $a\in A$ is of the form $a=a_n$ for some $n$, and the hypothesis on the set $A$ implies that $a_n\notin B\cup Y$.  We conclude that $a_n\in\Omega(f)$ and $f(a_n)=0$.  On the other hand, if $a\notin A\cup Y$ then $a\ne a_n$ for all $n$ and therefore $f(a)\ne0$ when $a\in\Omega(f)$.  This proves the first two inclusions in (1).  Similarly, each $b\in B$ is equal to some $b_n$ and it is an isolated point in $\sigma(f)$, thus a pole.  Points $b\notin B\cup Y$ belong to $\Omega(f)$, and this verifies the last two inclusions in (1).

We have established the equivalence of (1) and (2).  The equivalence of (2) and (3) is easily verified.  Indeed, assume that (2) is satisfied, $J$ is a component of the complement of $Y$, and $a,a'\in A\cap Y$.  There are then integers $n,m$ so that $a=a_n$ and $a'=a_m$.  Then we have $b_n,b_m\in B\cap Y$, and therefore one of these points is between $a$ and $a'$. The second condition in (3) is verified similarly.  

Conversely, assume that (3) holds.  For each connected component $(c,d)$ of the complement of $Y$ there exists then a family $\{(b_i,a_i):i\in I\}$
 of pairwise disjoint intervals contained in $(c,d)$ such that $A\cap (c,d)\supset\{a_i :i\in I\}$,  $B\cap (c,d)\supset\{b_i :i\in I\}$, and the sets $(A\cap (c,d))\setminus\{a_i:i\in I\}$ and $(B\cap (c,d))\setminus\{b_i:i\in I\}$ contain at most one point each. If $(A\cap (c,d))\setminus\{a_i:i\in I\}=\{a\}$, then $(c,a)$ is disjoint from $(b_i,a_i)$ for all $i$.  Analogously, if $(B\cap (c,d))\setminus\{b_i:i\in I\}={b}$  then $(b,d)$ is disjoint from all the intervals $(b_i,a_i)$.  The set $O$ is now defined to be the smallest Lebesgue regular open set containing all the intervals $(b_i,a_i)$ as well as the intervals $(c,a)$ and $(b,d)$ when needed.

The last assertion in the statement follows immediately from Corollary \ref{cor:sigma-of-measure-zero}.
\end{proof}

The special points $a,b$ which appear in the proof of $(3)\Rightarrow(2)$ are the `loners' of \cite{chal-et-al}.  When a loner $a$ exists, the function $f=k_O$ has a pole at $c\in Y$ unless there is also a loner on the other side of $c$.  Similarly, the function $f$ may have a zero at $d$.

When a component $(c,d)$ of $(\mathbb{R}\cup\{\infty\})\setminus Y$  contains no points in $A\cup B$, the entire interval $(c,d)$ can be added to the set $O$. This is the extent of nonuniqueness allowed in the selection of the set $O$, and thus in the choice of the function $f$, when $Y$ has zero Lebesgue measure.

The preceding result can be reformulated as an interpolation result for self maps of the unit disk using the conformal map $z\mapsto (z-\zeta)/(z+\overline{\zeta})$ of $\mathbb{C}^+$ onto the disk $\mathbb D$ for some $\zeta\in\mathbb{C}^+$.  We state a particular case which may be useful in other contexts.  We denote by $\mathbb{T}=\partial\mathbb{D}$ the unit circle.

\begin{cor}
Assume that $A,B$, and $Z$ are three pairwise disjoint subsets of $\mathbb T$ such that $Z$ is closed, the limit points of $A$ and $B$ are contained in $Z$, and $A$ and $B$ are interlaced on every component of $\mathbb{T}\setminus Z$.  Fix $\alpha,\beta\in\mathbb T$ with $\alpha\ne\beta$. Then there exists an analytic function $\vartheta:\mathbb D\to\mathbb D$ such that
\begin{enumerate}
\item $\vartheta$ can be continued analytically across $\mathbb{T}\setminus Z$;
\item $|\vartheta(z)|=1$ for all $z\in\mathbb{T}\setminus Z$;
\item $A=\{a\in\mathbb{T}\setminus Z:\vartheta(a)=\alpha\}$ and $B=\{b\in\mathbb{T}\setminus Z:\vartheta(b)=\beta\}$.
\end{enumerate}
When $Z$ has zero linear Lebesgue measure, the function $\theta$ can be chosen to be inner.
\end{cor}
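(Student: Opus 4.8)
The plan is to transport the real-line result of the preceding theorem to the circle via the conformal map $\varphi(z)=(z-\zeta)/(z+\overline\zeta)$ of $\mathbb C^+$ onto $\mathbb D$, which extends to a homeomorphism of $\mathbb R\cup\{\infty\}$ onto $\mathbb T$. First I would pull back the data: set $Y=\varphi^{-1}(Z)\subset\mathbb R$ (a closed set), $A'=\varphi^{-1}(A)$, $B'=\varphi^{-1}(B)$; since $\varphi$ is a homeomorphism, these are pairwise disjoint, $Y$ is closed, the limit points of $A'\cup B'$ lie in $Y$, and the interlacing of $A$ and $B$ on components of $\mathbb T\setminus Z$ becomes interlacing of $A'$ and $B'$ on components of $(\mathbb R\cup\{\infty\})\setminus Y$. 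Thus condition (3) of the previous theorem holds for $(A',B',Y)$, so there is $f\in\mathcal H$ and a Lebesgue regular open set $O$ with $f=$ (up to the freedom described there) such that $A'\subset\{x\in\Omega(f):f(x)=0\}\subset A'\cup Y$ and $B'\subset\sigma(f)\subset B'\cup Y$.

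The second step is to convert $f\in\mathcal H$ into a self-map of $\mathbb D$ sending the prescribed sets to the prescribed values $\alpha,\beta$. Here I would compose with a Möbius transformation of the closed upper half-plane onto the closed disk that carries $0$ to $\alpha$ and $\infty$ to $\beta$: let $\psi$ be the Möbius map with $\psi(0)=\alpha$, $\psi(\infty)=\beta$, chosen (by picking the image of one more point, e.g.\ $\psi(1)$) so that $\psi$ maps $\overline{\mathbb C^+}$ onto $\overline{\mathbb D}$ and the real line onto $\mathbb T$. Then define $\vartheta=\psi\circ f\circ\varphi^{-1}:\mathbb D\to\mathbb D$. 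Since $f$ continues analytically across $\Omega(f)$ with real values there, $\psi\circ f$ continues analytically across $\Omega(f)$ with values on $\mathbb T$; transporting by the conformal $\varphi^{-1}$, $\vartheta$ continues analytically across $\varphi(\Omega(f))$ with $|\vartheta|=1$ on that set. One checks $\varphi(\Omega(f))\supset\mathbb T\setminus Z$: indeed $\sigma(f)\subset B'\cup Y$ need not equal $Y$, but points of $B'\setminus Y$ are isolated poles of $f$, and $\psi$ sends $\infty$ to the \emph{finite} point $\beta$, so $\psi\circ f$ is in fact analytic there too and equals $\beta$; hence $\psi\circ f$ extends analytically across all of $(\mathbb R\cup\{\infty\})\setminus Y$, giving (1) and (2). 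For (3): $\vartheta(w)=\alpha$ for $w\in\mathbb T\setminus Z$ iff $f(\varphi^{-1}(w))=0$, and the sandwich $A'\subset\{f=0\text{ in }\Omega(f)\}\subset A'\cup Y$ together with $Y=\varphi^{-1}(Z)$ gives exactly $A=\{a\in\mathbb T\setminus Z:\vartheta(a)=\alpha\}$; similarly $\vartheta(w)=\beta$ iff $\varphi^{-1}(w)\in\sigma(f)$ or $f$ has a pole there, and the sandwich $B'\subset\sigma(f)\subset B'\cup Y$ gives $B=\{b\in\mathbb T\setminus Z:\vartheta(b)=\beta\}$.

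For the final sentence: when $Z$ has zero linear Lebesgue measure, so does $Y$ (the homeomorphism $\varphi$ of $\mathbb T$, being a Möbius map restricted to the circle, is bi-Lipschitz on compact subsets of $\mathbb R$ and carries null sets to null sets, with the point $\infty$ contributing nothing). Then $\sigma(f)\subset B'\cup Y$ has measure zero, so Corollary \ref{cor:sigma-of-measure-zero} applies and $f=ck_{\Gamma(f)}$ for some $c>0$; in particular the measure $\rho$ in the Nevanlinna representation of $f$ is singular (supported on the null set $\sigma(f)$), so by the discussion following \eqref{eq:nev-formule} the boundary function $f(x)$ is real for a.e.\ $x$, whence $|\vartheta(w)|=1$ for a.e.\ $w\in\mathbb T$; a self-map of $\mathbb D$ with unimodular boundary values a.e.\ is inner, as desired.

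The main obstacle I expect is bookkeeping at the two ``special'' boundary points of $\mathbb C^+$ and $\overline{\mathbb C^+}$: one must be careful that $\psi$ sends $\infty$ to a finite unimodular point and thereby \emph{removes} the poles of $f$ coming from $B'\setminus Y$ (turning them into honest $\beta$-points), and one must verify that $\sigma(f)$ — which a priori is only squeezed between $B'$ and $B'\cup Y$ — contributes to the $\beta$-set exactly the set $B$ after transport, with no spurious points and nothing missing. Everything else is routine transport of analytic continuation, boundary values, and Lebesgue-null sets under the conformal maps.
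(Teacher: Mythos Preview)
Your approach is exactly what the paper intends: it gives no proof beyond the sentence ``the preceding result can be reformulated \dots\ using the conformal map $z\mapsto(z-\zeta)/(z+\overline\zeta)$,'' and you have correctly filled in the transport argument, including the key observation that the Möbius map $\psi$ with $\psi(\infty)=\beta$ turns the simple poles of $f$ at points of $B'$ into honest $\beta$-points of $\psi\circ f$, so that $\vartheta$ extends analytically across all of $\mathbb T\setminus Z$ and not merely across $\varphi(\Omega(f))$.

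One small bookkeeping point you do not mention: the previous theorem is stated for $A',B',Y\subset\mathbb R$, but the conformal map $\varphi$ always sends $\infty$ to $1\in\mathbb T$, so the pullback places $\varphi^{-1}(1)=\infty$ in one of $A',B',Y$ if $1$ happens to lie in $A,B$, or $Z$. This is harmless---pre-compose with a rotation of $\mathbb D$ so that the preimage of $\infty$ lands in $Z$ (when $Z\ne\varnothing$) or at a point of $\mathbb T\setminus(A\cup B)$ (when $Z=\varnothing$, in which case $A\cup B$ is finite)---but it is worth a sentence.
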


The case of a finite set $Z$ is considered in \cite{chal-et-al}, while $Z=\varnothing$, corresponding to finite Blaschke products, was studied in \cite{go}.

\end{document}